\title {Compactification and trees of spheres covers}
\date{}
\newtheorem{theorem} {Theorem}[section]
\newtheorem{proposition}[theorem]{Proposition}
\newtheorem{lemma}[theorem]{Lemma}
\newtheorem{corollary}[theorem]{Corollary}
\newtheorem{definition}[theorem]{Definition}
\theoremstyle{plain}
\newtheorem{theoremint} {Theorem}
\newtheorem*{definitionint}{Definition}
 \newtheorem*{claimint}{Claim}
 \newtheorem*{corollaryint}{Corollary}
 \newtheorem{claim}{Claim}
\theoremstyle{remark}
\newtheorem{remark}[theorem]{Remark}
\renewenvironment{proof}{\noindent{\bf Proof. }}{\hfill{$\square$} \vskip.3cm}
\def\cal{\mathcal}
\def\C{{\mathbb C}}
\def\F{{\mathcal F}}
\def\I{{\mathcal I}}
\def\N{{\mathbb N}}
\def\P{{\mathbb P}}
\def\Pt{{\mathcal P}}
\def\S{{\mathbb S}}
\def\T{{\mathcal T}}
\def\Aut{{\rm Aut}}
\def\Rat{{\rm Rat}}
\def\rat{{\rm rat}}
\def\RevTC{{\widehat{\bf Rat}}}
\def\DRevTC{{{\bf Dyn}}}
\def\drevTC{{{\bf dyn}}}
\def\revTC{{\widehat{\bf rat}}}
\def\revT{{\bf rat}}
\def\RevT{{\bf Rat}}
\def\ts{{tree of spheres }}
\def\CST{\bf{\bf{\mathfrak T}}}
\def\M{{\rm Mod}}
\def\MT{{\bf Mod}}
\def\CM{{\overline {\rm Mod}}}
\def\CMT{{\widehat{\bf Mod}}}
\def\MI{{\bf{\bf{ I}}}}
\def\Top{{\rm {\mathfrak B}   }}
\def\TopT{{\bf {\mathfrak B} }}
\def\OubQ{{\bf {\bf \Pi} }}
\def\Oub{{\Pi}}
 \def\epsilon{{\varepsilon}}
\def\card{{\rm card}}
\def\deg{{\rm deg}}
\def\VConf{{\rm VConf}}
\def\VM{{\rm VM}}
  \author[Matthieu Arfeux]{Matthieu Arfeux}                                
    \address{Stony Brook University,Stony Brook, NY 11794}                                    
    \email{matthieu.arfeux@stonybrook.edu}                                      
\begin{document}

\maketitle

\begin{abstract}
 The space of dynamically marked rational maps can be identified with a subspace of the space of covers between trees of spheres on which there is a notion of convergence that makes it sequentially compact. In this paper we describe a topology on the quotient of this space under the natural action of its group of isomorphisms. This topology is proved to be consistent with this notion of convergence.
\end{abstract}

\let\thefootnote\relax\footnote{	 {\bf Key Words}: limits of dynamical systems, compactification, rescaling limits, Deligne-Mumford Compactification, algebraic geometry, trees of spheres, 37F20}


\section{Introduction}


We denote by $\S:= \P^1 (\C)$ the Riemann sphere.
For $d\geq 1$, we denote by $\Rat_d$ the space of rational maps $f:{\mathbb S}\to {\mathbb S}$ of degree $d\geq 1$ endowed with the topology of uniform convergence. In particular, $\Aut(\S):=\Rat_1$ is the group of Moebius transformations and acts on $\Rat_d$ by conjugacy :
\[\Aut(\S)\times \Rat_d\ni (\phi,f)\mapsto \phi\circ f\circ \phi^{-1}\in \Rat_d.\]  
We are interested in the quotient $\rat_d$ of $\Rat_d$ by this action. The space $\rat_d$ is not compact for any $d$.
In this paper we study spaces of rational maps marked by portraits and a compactification of the associated moduli space defined below.

Let $X$ be a finite set with at least 3 elements. A {\it sphere marked by $X$} is an injection $x:X\to \mathbb S$.
A {\it portrait} ${\bf F}$ of degree $d\geq 2$ is a pair $(F,\deg)$ where 
\begin{itemize}
\item $F:Y\to Z$ is a map between two finite sets $Y$ and $Z$ and
\item $\deg:Y\to \{1,2,\ldots,d\}$ is a function that satisfies
\[\sum_{a\in Y}\bigl(\deg(a) -1\bigr) = 2d-2\quad\text{and}\quad \sum_{a\in F^{-1}(b)} \deg(a) = d\quad\text{ for all } b\in Z.\] 
\end{itemize}
Typically, $Z\subset \S$ is a finite set, $F:Y\to Z$ is the restriction of a rational map $f:\S\to \S$ to $Y:=f^{-1}(Z)$ of degree $d$ and $\deg(a)$ is the local degree of $f$ at $a$. In this case, the Riemann-Hurwitz formula and the conditions on the function $\deg$ implies that $Z$  contains the set of critical values of $f$ so that $f:\S-Y\to \S-Z$ is an unbranched cover. 

\begin{definitionint}[Marked rational map]
A rational map marked by a degree $d$ portrait ${\bf F}$ is a triple $(f,y,z)$ where
\begin{itemize}
\item $f\in \Rat_d$
\item $y:Y\to \S$ and $z:Z\to \S$ are marked spheres, 
\item $f\circ y = z\circ F$ on $Y$ and 
\item $\deg_{y(a)}f = \deg(a)$ for $a\in Y$. 
\end{itemize}
\end{definitionint}

If $(f,y,z)$ is marked by ${\bf F}$, we have the following commutative diagram : 

\centerline{
$\xymatrix{
 Y \ar[r]^{y}    \ar[d] _{{ F}} &\S  \ar[d]^{f} \\
      Z \ar[r]_{z}  &\S.
  }$}
  Note that in particular, from the definition of ${\bf F}$ it follows that $y(Y)=f^{-1}(z(Z))$.

\begin{definitionint}[Dynamically marked rational map]
A rational map dynamically marked by  $({\bf F},X)$ is a triple $(f,y,z)$ which is a rational map marked by ${\bf F}$ such that  $X\subseteq Y\cap Z$ and $y|_X=z|_X$. 
\end{definitionint}

We denote by $\Rat_{\bf F}$ the set of rational maps marked by ${\bf F}$ and $\Rat_{{\bf F},X}$ the set of rational maps dynamically marked by $({\bf F},X)$. 

The group $\Aut(\S)$ acts on $\Rat_{\bf F}$ by pre-composition and post-composition: a pair of Moebius transformations $(\phi,\psi)\in \Aut(\S)\times \Aut(\S)$ maps the marked rational map $(f,y,z)\in \Rat_{\bf F}$ to 
\[(\phi\circ f\circ \psi^{-1},\psi\circ y, \phi\circ z)\in \Rat_{\bf F}\]
as in the following diagram:

\centerline{
$\xymatrix{
 Y \ar[r]^{y}    \ar[d] _{{ F}} &\S  \ar[d]^{f}\ar[r]^\psi& \S\ar[d] ^{\phi\circ f \circ \psi^{-1}}\\
      Z \ar[r]_{z}  &\S\ar[r]_\phi &\S.
  }$}
  
We denote by $\rat_{\bf F}$ the quotient of $\Rat_{\bf F}$ by the action of $\Aut(\S)\times \Aut(\S)$. 

Likewise, the group $\Aut(\S)$ acts on $\Rat_{{\bf F},X}$ by conjugacy: a Moebius transformation $\phi\in \Aut(\S)$ maps the dynamically marked rational map $(f,y,z)\in \Rat_{{\bf F},X}$ to 
\[(\phi\circ f \circ \phi^{-1},\phi\circ y,\phi\circ z)\in \Rat_{{\bf F},X}.\]
We denote by $\rat_{{\bf F},X}$ the quotient of $\Rat_{{\bf F},X}$ by the action of $\Aut(\S)$.  

For any finite set $E$ with at least three elements, we define the {\it moduli space} $\M_E$ to be the space of spheres marked by $E$ modulo post-composition by Moebius transformations. As $\card X\geq 3$ and if $(f,y,z)\in \Rat_{\bf F}$, then $f$ is determined by the pair $(y,z)$.  Indeed, a rational map is totally determined if we know the preimages, with multiplicities, of any triple of points. Thus $\rat_{\textbf{F},X}$ naturally injects into the product of the moduli space of spheres marked by $Y$ and by $Z$,
$$\rat_{\textbf{F},X}\to\M_Y\times\M_Z.$$

In fact, one can prove that given a portrait ${\bf F}$, the natural projection $$\rat_{\textbf{F}}\to\M_Y$$ is also injective, i.e. $[y]\in\M_Y$ characterizes $[(f,y,z)]\in\rat_{{\bf F}}$.


\medskip
\noindent{\textbf{Main goal.}}

In \cite{FT} R. Funahashi and M. Taniguchi define a compactification $\CM_E$ of $\M_E$ by adding equivalence classes of noded spheres. This compactification can be identified with special cases of compactifications introduced by Deligne and Mumford in \cite{DM} and by Bers in \cite{Bers}. 
In Section \ref{chap5}, we rephrase \cite{FT} with the vocabulary of trees of spheres from \cite{A1}. We will denote this compactification by $\CM_E$. 
 We introduce the set $\CMT_E$ of trees of spheres marked by $E$ (cf Figure \ref{figexample}) modulo a certain notion of isomorphism between trees of spheres. 
Considering $\rm Quad_E$, the set of quadruples of distinct elements of $E$, we prove the following:

\begin{theoremint}\label{prop519}
By taking cross ratios, the image of the embedding 
$$\Top:{\CMT}_E\to  \S ^{ \rm Quad_E}$$
 is equal to the \cite{FT} closure of $\M_E$.
\end{theoremint}

\begin{corollaryint}
The map $\Top$ endows ${\CMT}_E$ with a topology that makes it compact.
\end{corollaryint}

We define a convergence notion on ${\CMT}_E$ as in \cite{A1}, and prove that it is compatible with this topology on $\CM_X$.
Furthermore, we extends the results of \cite{FT} to give a combinatorial characterization of the elements of $\Top({\CMT}_E)$ (cf Theorem \ref{bijpartition}). This characterization will be particularly useful later in the article.


In section \ref{chap6}, we compactify $\rat_{{\bf F}}$ of as a subspace of $\M_Y$ by taking its closure in $\CM_Y$. To do this,
we introduce $\revTC_{{\bf F}}$ to be the set $\RevTC_{{\bf F}}$ of covers between trees of spheres modulo some notion of isomorphism.  The space $\revT_{{\bf F}}$ is the subspace consisting in classes of covers between trees with only one internal vertex and it can be identified with $\rat_{{\bf F}}$.  

 \begin{theoremint} \label{compseqrev}
 The natural projection map 
 $\MI: \revTC_{{\bf F}}\to {\MT}_Y$ is injective and the topological space $\revTC_{{\bf F}}$ is compact as the closure of $\revT_{{\bf F}}$, 
$$\text{ie:}\quad\MI(\revTC_{{\bf F}})=\overline{{\MI}(\revT_{{\bf F}})}.$$
 \end{theoremint}

 We define a notion of convergence on $\RevTC_{{\bf F}}$ as in \cite{A1}
 and we prove that it is compatible with this topology.
 
%


In Section \ref{chap7}, we identify $\rat_{{\bf F},X}$ with a subset of $\rat_{{\bf F}}$. Hence, it is also endowed with a topology. We introduce $\drevTC_{{\bf F},X}$ the set of dynamical systems between trees of spheres modulo a certain notion of isomorphism and prove that it can also be identified with a subset of $\revTC_{{\bf F}}$. The elements of $\overline{\rat_{{\bf F},X}}$ can be identified with isomorphism classes of dynamical systems between trees of spheres. 
We prove the following result.

  \begin{theoremint}\label{ccompact}  The space $\drevTC_{{\bf F},X}$ is compact.
\end{theoremint}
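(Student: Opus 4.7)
The plan is to deduce this compactness result directly from the previous theorem, which gives compactness of $\revTC_{{\bf F}}$. Since the paper identifies $\drevTC_{{\bf F},X}$ as a subspace of $\revTC_{{\bf F}}$, and a closed subspace of a compact space is compact, the whole argument reduces to showing that $\drevTC_{{\bf F},X}$ is closed in $\revTC_{{\bf F}}$.

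First I would re-examine how the dynamical condition $y|_X=z|_X$ extends from $\rat_{{\bf F},X}$ to the tree-of-spheres setting. Since $X\subseteq Y\cap Z$, an element of $\revTC_{{\bf F}}$ comes equipped with a marking of a source tree by $Y$ and a marking of a target tree by $Z$; restricting both markings to $X$ yields two elements of $\CMT_X$, and the dynamical condition precisely asserts that these two restrictions coincide. In other words, the two forgetful maps
\[\pi_Y,\ \pi_Z:\revTC_{{\bf F}}\longrightarrow \CMT_X\]
which remember only the $X$-marking on the source tree and on the target tree, respectively, satisfy $\pi_Y=\pi_Z$ exactly on $\drevTC_{{\bf F},X}$.

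Next I would check that $\pi_Y$ and $\pi_Z$ are continuous, which follows from the description of the topology via the embedding $\TopT$ into $\S^{{\rm Quad}_X}$ given in the first theorem: the cross-ratio coordinates of the $X$-restriction of the source marking depend continuously on a point of $\revTC_{{\bf F}}$, and similarly for the target. Once this is granted, the equalizer
\[\drevTC_{{\bf F},X}=\{u\in \revTC_{{\bf F}}\mid \pi_Y(u)=\pi_Z(u)\}\]
is closed because $\CMT_X$ is Hausdorff (being a compact subset of the Hausdorff space $\S^{{\rm Quad}_X}$), and the diagonal of a Hausdorff space is closed.

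The main obstacle will be the bookkeeping required to make the formulation above precise: one must verify that the notion of isomorphism defining $\drevTC_{{\bf F},X}$ is compatible with the isomorphism relation defining $\revTC_{{\bf F}}$, so that the equalizer really is the image of $\drevTC_{{\bf F},X}$ under the embedding, and that the two projections $\pi_Y,\pi_Z$ land in the same compactified moduli space $\CMT_X$ (rather than in two distinct copies) so that their equality is meaningful. Once this identification is checked, closedness is automatic and compactness follows from the previous theorem.
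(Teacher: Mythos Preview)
Your proposal is correct and follows essentially the same strategy as the paper: one identifies $\drevTC_{{\bf F},X}$ with the equalizer of two continuous forgetful maps $\revTC_{{\bf F}}\to\CMT_X$ (the paper's $\OubQ_{Y,X}$ and $\OubQ_{Z,X}$, applied via the embedding $[I]:\revTC_{{\bf F}}\hookrightarrow\CMT_Y\times\CMT_Z$), so it is closed in the compact space $\revTC_{{\bf F}}$. The bookkeeping you anticipate is exactly what the paper handles in the lemmas preceding the proof (the injection $\drevTC_{{\bf F},X}\hookrightarrow\revTC_{{\bf F}}$, the characterization of compatibility via $\Oub_{Y,X}$, and the continuity of $\OubQ_{Y,X}$).
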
 

We prove in particular that this topology is compatible with the dynamical convergence defined in \cite{A1}. We show that the discussions in \cite{A2} prove the following inclusions:
$$\overline{\rat_{{\bf F},X}}\subsetneq\drevTC_{{\bf F},X}\subsetneq \revTC_{{\bf F}}. $$

%

\medskip
\noindent{\it{Important note:}} Mathematically, this paper has to be considered before \cite{A1} as it is totally independent from it whereas \cite{A1} cites this paper. However, \cite{A1} is a better introduction to the motivations behind the study of such questions and the author spends more time to describe the tools used here. 


\medskip
\noindent{\textbf{Acknowledgments.}}

 I would want to thanks my advisor Xavier Buff for all the time he spent to teach me how to write and make clear my ideas. I also want to thank Jan Kiwi and Laura De Marco for their helpful comments.


\section{Isomorphism classes of trees of spheres}\label{chap5}

\subsection{Introduction}
In this subsection we recall notions and notations introduced in \cite{A1}.

 Let $X$ be a finite set with at least 3 elements. A (projective) tree of spheres ${\cal T}$ marked by $X$ (or $\T^X$, cf example on Figure \ref{figexample}) is the following data : 
\begin{itemize}
\item a combinatorial tree $T$ (or $T^X$) marked by $X$, disjoint union of its set of vertices $V$ and its set of edges $E$, whose set of leaves (external vertices) is $X$ and such that every internal vertex has at least valence $3$ (stability), and
\item for each internal vertex $v$ of $T$, an injection $i_v:E_v\to {\S}_v$ of the set $E_v$ of edges adjacent to $v$ into a projective sphere ${\S}_v$.
\end{itemize}
 We denote by $\{v,v'\}$ the edge between two vertices $v$ and $v'$ if it exists.
 We use the notation $X_v := i_v(E_v)$ and define the map $a_v:X\to {\S}_v$ such that $a_v(x) := i_v(e)$ if $x$ and $e$ lie in the same connected component of $T-\{v\}$. We say that $i_v(e)$ is the attaching point of the edge $e$ on $v$ or $\S_v$.
 
 For $\star\in T\setminus\{v\}$ we denote by $B_v(\star)$ the connected component of $T\setminus\{v\}$ containing $\star$. It is called the {\it branch} at $v$ containing $\star$. For any vertex $v'$ in a branch $B_v(e)$ where $e$ is an edge adjacent to $v$. We will also make use of the notation $i_v(v')$ for the attaching point $i_v(e)$.
 
  We denote by $[v,v']$ the arc (i.e. injective path) between $v$ and $v'$. We will also denote it by $[v_1,v_2,\ldots v_k]$ if $v=v_1$, $v'=v_k$ and the vertices $v_i$ are adjacent to the $v_{i+1}$.

 \begin{figure} \centerline{\includegraphics[width=9cm]{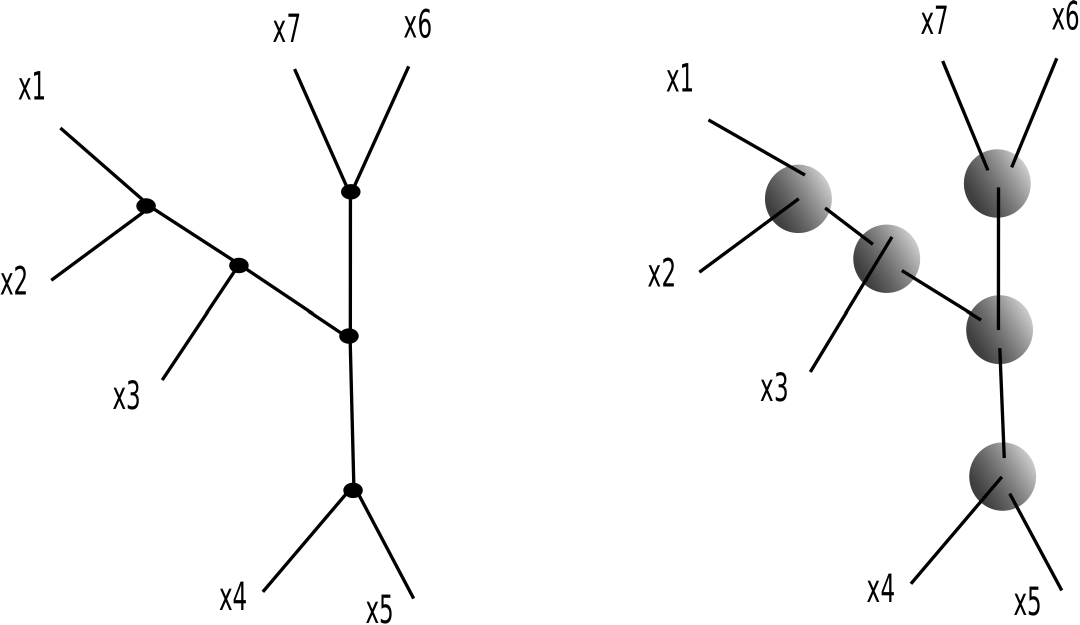}}  \caption{On the left, a combinatorial tree $T^X$ for ${X=\{x1\ldots x7\}}$. On the right, the representation of a tree of spheres whose combinatorial tree is $T^X$.}\label{figexample}\end{figure}
 
We will later identify trees with only one internal vertex with marked spheres defined below.

\begin{definition}
A sphere marked by $X$ is an injection $$x:X\to \mathbb S.$$
\end{definition}


\subsection{Isomorphism of combinatorial trees and partitions}

\begin{definition}[Tree maps]
A (combinatorial) tree map is a map between two combinatorial trees that maps internal vertices to internal vertices, external ones to external ones, edges to edges and two adjacent vertices to adjacent ones.
\end{definition}

\begin{definition}[Isomorphism of marked trees]
An isomorphism between two combinatorial trees marked by $X$ is a tree map which is bijective and restricts to the identity on $X$.
\end{definition}

Define $P_v:= \{  B_{v}(e) \cap X ~|~ e\in E_v\}$.
By convention, a partition does not contain the element $\emptyset$.
%
%
For all $v$ internal vertex, $P_v$ is a partition of $X$.
%

Let $\psi$ be the map between the set of trees marked by $X$ and the set of collections of partitions of $X$ that maps $T$ to $$\psi(T)=\{ P_v\subset X| v\text{ internal vertex of }T^X\}.$$ 

The goal of this section is to give a characterization of the image of this map and of the isomorphism classes of combinatorial trees marked by $X$.

\begin{definition}[Admissible collection of partitions]\label{defadmpart}
A collection ${\Pt}$ of partitions is admissible if it satisfies the following properties : 
\begin{enumerate}
\item\label{mt1} every partition ${ P}\in {\Pt}$ contains at least three distinct elements, 
\item\label{mt2} for all partition ${P}\in {\cal  P}$ and all subset $B\in { P}$, either there exists a partition ${ P}'\in {\cal  P}$ containing $X-B$, or $B=\{x\}$ with $x\in X$,
\item\label{mt3} if ${ P}_1\in {\cal  P}$ and ${ P}_2\in {\cal  P}$ are two distinct partitions, then ${ P}_1\cap { P}_2=\emptyset$. 
\end{enumerate}
\end{definition}

\begin{remark}\label{iasdmissparti}
If $T$ is a combinatorial tree, then $\psi(T)$ is clearly a collection of admissible partitions (a detailed proof can be found in \cite{A}).
\end{remark}

We prove the following theorem:

\begin{theorem}\label{bijpartition}
Two combinatorial trees $T$ and $T'$ are isomorphic if and only if ${\psi(T)=\psi(T')}$.
If $T$ is a combinatorial tree, then $\psi(T)$ is a collection of admissible partitions, and every admissible collection of partition is the image of a (stable) tree.
\end{theorem}

\begin{corollary}\label{propsurj}
The map $\psi$ induces a bijection between the set of isomorphism classes of trees and the set of admissible collections of partitions.
\end{corollary}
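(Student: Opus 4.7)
The plan is to prove Theorem~\ref{bijpartition} by constructing, for every admissible set of partitions $\Pt$, an explicit combinatorial tree $T_\Pt$ and showing that $T\mapsto\psi(T)$ and $\Pt\mapsto T_\Pt$ are mutually inverse operations modulo tree isomorphism fixing $X$. The corollary then follows at once from the first and third clauses of the theorem.

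First I verify the three admissibility axioms for $\psi(T)$. Axiom~(1) is immediate from stability of $T$, since $|P_v^X|=|E_v|\geq 3$. For (2), a block $B=B_v(e)\cap X$ with $e$ an edge from $v$ to some $v'$ is either the singleton $\{v'\}$ (when $v'$ is a leaf) or has complement $X-B=B_{v'}(e)\cap X$ lying in $P_{v'}^X$ (when $v'$ is internal). For (3), if $B\in P_v^X\cap P_{v'}^X$ with $v\neq v'$, I write $B=B_v(f)\cap X=B_{v'}(f')\cap X$, let $e,e'$ be the first edges of the path $[v,v']$ at $v$ and $v'$ respectively, and run through the four cases $f=e$ or not versus $f'=e'$ or not. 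In each case stability, which guarantees at least one branch of $v$ or $v'$ away from the path and from $f$ or $f'$, combined with the fact that every branch contains a leaf, produces a strict inclusion between the two presentations of $B$, a contradiction.

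Given admissible $\Pt$, I define $T_\Pt$ on the vertex set $\Pt\sqcup X$ by declaring an edge $\{P,P'\}$ whenever some $B\in P$ satisfies $X-B\in P'$, and an edge $\{P,x\}$ for every singleton $\{x\}\in P$. Axiom~(3) makes $P'$ unique given $B$ and shows that at most one edge joins any two partitions. To prove that $T_\Pt$ is a stable tree I argue acyclicity and connectedness separately. Acyclicity: a cycle $P_0-P_1-\cdots-P_k=P_0$ with $k\geq 3$ is encoded by blocks $B_i\in P_{i-1}$ satisfying $X-B_i\in P_i$; disjointness of distinct blocks inside each $P_i$, combined with a short argument (using (3)) that $B_2\neq X-B_1$, yields $B_1\supseteq B_2\supseteq\cdots\supseteq B_k$, and closing the cycle similarly yields $B_1\subseteq B_k$, whence all the $B_i$ are equal and (3) applied to $B_1=B_2$ forces $P_0=P_1$, a contradiction. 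Connectedness: starting from any $P_0\in\Pt$ and any $x\in X$, let $B_i$ be the block of $P_i$ containing $x$ and let $P_{i+1}$ be the unique partition with $X-B_i\in P_{i+1}$; the chain $B_0\supsetneq B_1\supsetneq\cdots$ strictly decreases (the block of $P_{i+1}$ containing $x$ is disjoint from $X-B_i$, and $|P_{i+1}|\geq 3$ forces a further strict descent), so it reaches a singleton $\{x\}\in P_N$, giving a path $P_0-P_1-\cdots-P_N-x$. Stability is axiom~(1), and a direct inspection of the branches at $P\in \Pt$ in $T_\Pt$ shows $\psi(T_\Pt)=\Pt$.

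Finally, for the isomorphism clause, the map $v\mapsto P_v^X$ is injective on internal vertices of $T$ by axiom~(3) applied to $\psi(T)$, and it turns adjacency into a statement purely about partitions: two internal vertices are adjacent iff some block of one equals the complement of a block of the other, and an internal vertex $v$ is adjacent to a leaf $x$ iff $\{x\}\in P_v^X$. Hence $\psi(T)=\psi(T')$ gives a combinatorial tree isomorphism $T\to T'$ fixing $X$. The main obstacle is the acyclicity–connectedness analysis of $T_\Pt$: both arguments rely on the strict descent of the block chain $B_0\supsetneq B_1\supsetneq\cdots$, which in turn requires combining $|P|\geq 3$ with (3) to prevent successive partitions from coinciding.
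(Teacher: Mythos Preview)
Your proposal is correct and follows essentially the same route as the paper: you build $T_\Pt$ on the vertex set $\Pt\sqcup X$ with exactly the same edge rule (complementary blocks, and singletons attached to leaves), you use the descending chain $B_0\supsetneq B_1\supsetneq\cdots$ of blocks containing a fixed $x$ to reach the leaf, and you recover the isomorphism class of $T$ from $\psi(T)$ via the injection $v\mapsto P_v^X$.

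The only organisational difference worth noting is in how the tree axioms for $T_\Pt$ are checked. The paper packages both connectedness and acyclicity into a single claim (their Lemma~\ref{affirmation}): every vertex is joined to each leaf $x$ by a \emph{unique} path, and the first block along that path is the one containing $x$. Uniqueness then kills cycles in one stroke. You instead treat the two properties separately, running the block-chain argument once for connectedness and a second cycle-specific chain $B_1\supseteq B_2\supseteq\cdots\supseteq B_k\supseteq B_1$ for acyclicity. Both are fine; the paper's packaging is slightly more economical, while yours makes the role of axiom~(3) in forbidding cycles more visible. For the isomorphism clause your formulation (adjacency in $T$ translates to the complementary-block relation, hence $\psi(T)=\psi(T')$ forces $T\cong T'$) is a bit cleaner than the paper's, which argues more informally that the edge structure is ``rigid''. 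One small remark: for the ``iff'' in your adjacency criterion you only spell out the forward direction; the converse can be obtained either by the same branch-inclusion case analysis you used for axiom~(3), or more cheaply by counting edges once you know both $T$ and $T_{\psi(T)}$ are trees on the same vertex set.
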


We finish this section by the proof of Theorem \ref{bijpartition}.

Take any admissible collection of partitions $\cal P$.  Define a set of vertices ${V_T=\Pt\cup X}$. Define a set of edges $E_T$ as the set of $\{P_1,P_2\}$ for all $P_1\in \cal P$ and $P_2\in \cal P$ such that we have $B_i\in P_1$ and $B_2\in P_2$ satisfying $B_1\cup B_2= X$ with $B_1\cap B_2=\emptyset$ and the $\{  P_0,x \}$ satisfying $P_0\in \cal P$ and $\{   x\} \in P_0$. We define the graph $T_{\cal P}$ to be $V_T\sqcup E_T$.

\begin{lemma} \label{affirmation}
Let $x\in X$.
Every vertex $v\in V_T{\setminus}\{x\}$ can be connected to the vertex $x$ by a unique arc. Moreover, if the first edge of this arc is $\{v,P\}$, then $x\in P$. 
\end{lemma}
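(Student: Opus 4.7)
The plan is to establish existence and uniqueness simultaneously by introducing, for every vertex $v \in V_T$ and every incident edge $e$, a subset $S(v, e) \subseteq X$ recording which leaves should lie on the side of $e$ opposite $v$: for $v = P \in \mathcal{P}$ and $e = \{P, \star\}$ associated to the block $B \in P$, put $S(P, e) = B$; for $v = y \in X$ and its unique incident edge $e$, put $S(y, e) = X - \{y\}$. Two easy preliminaries make these definitions work. First, every singleton $\{y\}$ with $y \in X$ is a block of some partition: starting from any partition and the block containing $y$, property 2 produces a partition in which the block around $y$ strictly shrinks (strict because property 1 forces a third block, necessarily disjoint from both, to squeeze into the difference), and the finiteness of $X$ forces termination at $\{y\}$. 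Second, by property 3 both the partition containing $\{y\}$ and the pair $(B, X - B)$ attached to any edge $\{P, P'\}$ are unique, so $S(v, e)$ is well-defined.

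The key structural fact, immediate from the definition and the disjointness of distinct blocks in a partition, is that for every edge $e = \{v, v'\}$ the sets $S(v, e)$ and $S(v', e)$ partition $X$, and that at each vertex $v$ the family $\{S(v, e') : e' \ni v\}$ is itself the partition $v$ (when $v \in \mathcal{P}$) or $\{X - \{v\}\}$ (when $v \in X$).

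For existence, I build the path from $v_1$ iteratively: from a leaf $y \neq x$, cross the unique edge to the partition $P$ with $\{y\} \in P$; from $P \in \mathcal{P}$, let $B \in P$ be the block containing $x$, close the path with $\{P, x\}$ if $B = \{x\}$, and otherwise move along $\{P, P'\}$ to the unique $P'$ with $X - B \in P'$. The block of $P'$ containing $x$ is a strict subset of $B$, by the same argument as in the preliminaries, so the iteration terminates at $x$.

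For uniqueness, I prove by induction on $k$ that any path $[v_1, u_1, \ldots, u_k]$ with first edge $e = \{v_1, u_1\}$ and $u_k \in X$ satisfies $u_k \in S(v_1, e)$. The base case $k = 1$ is definitional. For $k > 1$, letting $e' = \{u_1, u_2\} \neq e$ (paths being simple), induction applied to the subpath from $u_1$ gives $u_k \in S(u_1, e')$, and pairwise disjointness of the $S(u_1, \cdot)$'s yields $u_k \notin S(u_1, e) = X - S(v_1, e)$, whence $u_k \in S(v_1, e)$. Specialising to $u_k = x$, the first edge of any path from $v_1$ to $x$ is forced to be the unique $e$ with $x \in S(v_1, e)$, which for $v_1 = P_1 \in \mathcal{P}$ is exactly the edge attached to the block of $P_1$ containing $x$ --- this yields the second assertion --- and iteration along the path gives uniqueness of the whole path. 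The only real obstacle is checking the complementarity $S(v, e) = X - S(v', e)$ at every edge; once that is in place, the rest reduces to a short induction.
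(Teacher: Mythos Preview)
Your proof is correct and follows essentially the same route as the paper: the existence argument is the identical recursive construction (move along the block containing $x$, which strictly shrinks by property~1), and your uniqueness argument via the labels $S(v,e)$ is a clean formalization of what the paper compresses into the single line ``the hypothesis $B_{i+1}\subset B_i$ is necessary and induces the unicity of the vertices choices at every step.'' Your inductive claim that any path ending at a leaf $u_k$ satisfies $u_k\in S(v_1,e)$ makes explicit exactly why each step is forced, and your check that no partition can contain $X-\{y\}$ (ruled out by property~1) is what guarantees the edge--to--block correspondence at each $P\in\mathcal P$ is a bijection, a point the paper leaves implicit.
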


\begin{proof} 
Define $v_1:=v$. We are looking for an arc $[v_1,v_2,v_3,\ldots,v_k,x]$.  

 If $v_1\in X$ then the second property assures the existence of a vertex $v_2$ such that $\{v_1\}$ lies in the partition. Then we are in the case $v_1\notin X$.
We find the $v_i$ recursively. 

Recurrence hypothesis: there exist $v_2,\ldots,v_i$ such that $[v_1,v_2, \ldots,v_i]$ is an arc and the element $B_i$ of $v_i$ containing $x$ is included in the one of $v_{i-1}$ containing $x$. 

Suppose that it is true for some $i\in \N$.
Let $B_i$ be this element. If $B_i=\{x\}$ then by construction $\{v_i,\{x\} \}\in E_T$ and  $[v_1,v_2, \ldots,v_i,\{x\}]$ is the desired arc.
 If not, we find $v_{i+1}$ containing $X{\setminus} B_i\in V_T$. Thus $\{v_i,v_{i+1} \}\in E_T$. 
If $B_{i+1}$ is the subset of $v_{i+1}$ containing $x$ then $B_{i+1}$ and $ X{\setminus} B_i$ are two elements of the partition $v_{i+1}$ so we have $B_{i+1}\subset B_i$ as desired. 
The property is true for $i+1$.

This construction stops because the inclusions of $B_i$ are strict. In addition we always have $x\in B_i$. It follows that if $v_k$ is the last vertex of the constructed arc then $v_k=x$. 

This arc is unique because the hypothesis $B_{i+1}\subset B_i$ is necessary and induces the unicity for the choice of the vertices choices at every step.
The end of the lemma follows from the construction.
\end{proof}

\begin{corollary}
The graph $T_{\cal P}$ is a combinatorial tree marked by $X$ and $\psi(T_{\cal P})=\cal P$.
\end{corollary}

\begin{proof} 
For $T_\cal P$, the connectivity and the non existence of cycle is clear from Lemma \ref{affirmation}. The stability is direct from the first property of Definition \ref{defadmpart}, and by construction the leaves of our tree are the elements of $X$. Hence $T_{\cal P}$ is a combinatorial tree marked by $X$.

Now we prove that $\psi(T_{\cal P})=\cal P$.
Let $v_1\in V_T$. Denote by ${ P}=\{p_1,\ldots,p_k\}$ the associated partition at the edges $\{p_i,\star\}$ of $v_1$. From Lemma \ref{affirmation} deduce that $B_{v_1}({\{p_i,\star\}})\subseteq p_1$. But ${ P}$ is a partition so it is an equality.
\end{proof}

Thus we proved the following:

\begin{corollary}\label{surjtreeofparti}
The map $\psi$ is surjective onto the set of admissible collections of partitions.
\end{corollary}

\begin{proof} (Theorem \ref{bijpartition})
Remark \ref{iasdmissparti} and Corollary \ref{surjtreeofparti} prove the two first statements. 

As an isomorphism between combinatorial trees is a tree map, we deduce that it maps an arc to an arc, so a branch to a branch, and it follows that two isomorphic combinatorial trees have the same image by $\phi$.

Suppose that two combinatorial trees $T$ and $T'$ have the same image by $\phi$. Let us prove that $T$ and $T'$ are isomorphic. Any internal vertex $v\in T$ has at least three branches and every branch contains at least one element of $X$. Take three elements of $X$ lying in three different branches of $v$. Then there exists a unique internal vertex $v'\in T'$ that has these three elements in different branches. Let us define $M(v):=v'$. As $\phi(T)=\phi(T')$, this does not depend on the choice of the elements in $X$. For $x\in X$ we define $M(x):=x$. 

Let us prove that $M$ is a tree map.
For simplicity we are going to suppose that $T'=T_{\phi(T)}$. Take $v$ and $w$ two adjacent vertices of $T$. Suppose that neither $v$ or $w$ are leaves. Then it is clear that $B_v(w)=X\setminus B_w(v)$. Because $M(v)$ and $M(w)$ give the same partitions of $X$ as $v$ and $w$, the construction of $T_{\phi(T)}$ insures that $M(v)$ and $M(v')$ are adjacent. The case when $v$ or $w$ is a leaf is similar.



\end{proof}


\subsection{Isomorphism of trees of spheres and topology}

\begin{definition}[Isomorphism of trees of spheres]
An isomorphism ${{\cal M}:\T_1\to\T_2}$ between two trees of spheres marked by $X$ is an isomorphism $M:T_1\to T_2$ between the corresponding combinatorial trees and for every internal vertex $v\in T_1$, a (projective) isomorphism $m_v:\S_v\to\S_{M(v)}$ that maps the attaching point of an adjacent edge $e$ to the attaching point of $M(e)$.
\end{definition}


Let $\CST_X$ be the set of trees of spheres marked by $X$. We define on $\CST_X$ an equivalence relation given by : $\T\sim \T'$ if and only if there exists an isomorphism $\mathcal{M}:\T\to\T'$ of trees of spheres marked by $X$. Note that it follows that for all internal vertex $v$ of $T$, $m_v:\S_v\to\S_{M(v)}$ is an isomorphism and $a_{M(v)} = m_v \circ a_v.$
We will sometime use the notation $T\sim_{\mathcal{M}}T'$.

We call moduli space of trees of spheres marked by $X$ and denote by ${\CMT_X}$ the quotient of the set $\CST_X$ by this equivalence relation.
We denote by ${\MT_X}$ the set of isomorphism class of a tree of spheres with a unique internal vertex marked by $X$. 

The moduli space ${\M_X}$ of spheres marked by $X$ is the set of injections of $X$ in $ \S $ modulo post-composition by a Moebius transformation. If $v$ is the unique vertex of $\T\in{\MT_X}$, then $\T$ is determined by the element $[a_v]\in {\M_X}$. Reversely, given an element $[i:X\to\S]\in{\M_X}$, we can define a tree $\T$ marked by $X$ that has a unique internal vertex $v$ with $\S_v=\S$ and $a_v=i$. We will freely consider these elements in ${\M_X}$ or ${\MT_X}$ according to convenience.

 The space space ${\M_X}$ is equipped with a quasi projective variety structure. Indeed, if we choose three distinct points of $X$, we can associate to every element of ${\M_X}$ the set of their cross ratios with the other elements of $X$ and which does not depend on the choice of a representative.

For this method, the three points that we chose plays a particular role. As in \cite{FT},  we avoid this problem by considering $\rm Quad_X$, the set of quadruples of distinct elements of $X$ and considering the embedding : 
$$\Top_X:{\M}_X\to  \S ^{ \rm Quad_X}$$
that associates to $[i]\in{\M}_X$ the collection of the cross ratios $$[i(x_1),i(x_2),i(x_3),i(x_4)]_{(x_1,x_2,x_3,x_4)\in \rm Quad_X}.$$

We are going to use this approach to give to ${\CMT_X}$ a projective variety structure.


Denote by $\rm Trip_{X}$ the set of triples of distinct elements of $X$.
Consider a combinatorial tree $T$ marked by $X$. Take $t:=(x_0,x_1,x_\infty)\in \rm Trip_{X}$. The vertices $x_0,x_1$ and $x_\infty$ are separated by a unique vertex $v_t$.
We say that this vertex separates the triple $t$. 

If this $T$ is the combinatorial tree of a tree of spheres $\T$, then the map $a_{v_t}$ maps the three elements of $t$ to distinct images. So there exists a unique projective chart $\sigma_t:\S_{v_t}\to  \S $ satisfying $\sigma_t\circ a_{v_t}(x_0)=0,\sigma_t\circ a_{v_t}(x_1)=1$ and $\sigma_t\circ a_{v_t}(x_\infty)=\infty$. The map $\sigma_t$ is called {\it the $t$-chart} of $T$. Define \[\alpha_t:=\sigma_t\circ a_{v_t} : X\to  \S .\]
One can easily check that the composition $\sigma_t\circ a_{v_t} : X\to  \S$ does not depend on the choice of a representative in the class of $\T$.




Recall that ${ \rm Quad_X}$ is the set of quadruples of distinct elements of $X$.

\begin{definition}
We define the following map:
$$\TopT_X:{\CMT_X}\to  \S ^{ \rm Quad_X}$$
that  maps every $[T]\in{\CMT_X}$ to the collection of the $(\alpha_t(x))_{(t,x)\in \rm Quad_X}$.
\end{definition}

The map $\TopT_X$ defines a topology on ${\CMT_X}$. We will sometime simply write $\TopT$ when there is no possible confusion.
 The following lemma implies that this topology is Haussdorff.

\begin{lemma}\label{rempart2} The map $\TopT$ is injective.
\end{lemma}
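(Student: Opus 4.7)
The approach is in two steps: first I would reconstruct the combinatorial tree $T$ underlying $\T$ purely from the collection $\TopT([\T])\in\S^{{\rm Quad}_X}$; second, given two classes $[\T]$ and $[\T']$ with the same image, I would build a projective isomorphism sphere by sphere.

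For the combinatorial step, observe that for every triple $t=(x_0,x_1,x_\infty)\in{\rm Trip}_X$ the map $\alpha_t=\sigma_t\circ a_v$ is the composition of a bijection $\sigma_t$ with $a_v$, whose fibers on $X$ are exactly the blocks $B_v(e)\cap X$ of the partition $P_v^X$. Hence $\alpha_t(x)=\alpha_t(x')$ iff $x$ and $x'$ lie in the same branch at $v$, so $P_v^X$ can be read off from the single component $\alpha_t$ of $\TopT([\T])$. Letting $t$ range over ${\rm Trip}_X$ sweeps out every internal vertex (each $v\in IV^X$ is separated by some triple since $v$ has valence at least three and every branch contains a leaf), so the entire admissible set of partitions $\psi(T)$ is recovered. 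By Corollary \ref{propsurj} this pins down $T$ up to combinatorial isomorphism marked by $X$.

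For the projective step, assume $\TopT([\T])=\TopT([\T'])$. Via the previous step I identify the underlying combinatorial trees, and for each $v\in IV^X$ I pick any triple $t$ separated by $v$. The two $t$-charts $\sigma_t:\S_v\to\hat\C$ and $\sigma_t':\S_v'\to\hat\C$ yield a projective isomorphism $m_v:=(\sigma_t')^{-1}\circ\sigma_t:\S_v\to\S_v'$, and since $\alpha_t=\alpha_t'$ as functions on $X$ one gets $m_v\circ a_v=a_v'$ on $X$. Assembling the family $(m_v)_v$ with the combinatorial identification should provide a degree-one cover of $\T$ onto $\T'$ restricting to the identity on $X$, hence an isomorphism of trees of spheres marked by $X$, so $[\T]=[\T']$.

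The main point to verify is that the sphere-wise identifications are mutually compatible at adjacent internal vertices, namely that $m_v$ sends the attaching datum $i_v(e)\in X_v$ to $i_v'(e)\in X_v'$ for every $e\in E_v$. But each $i_v(e)$ is of the form $a_v(x)$ for some $x\in B_v(e)\cap X$ (nonempty by stability), so this is automatic from $m_v\circ a_v=a_v'$. Independence of $m_v$ from the chosen triple is also immediate, since any two candidates agree on the three distinct points $a_v(x_0), a_v(x_1), a_v(x_\infty)$ and a Moebius transformation is determined by three values. The combinatorial step carries the essential content; the projective step is formal once $T$ has been recovered.
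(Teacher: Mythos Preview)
Your proof is correct and follows essentially the same approach as the paper's own argument: recover the admissible set of partitions $\psi(T)$ from the fibers of the $\alpha_t$, invoke Theorem~\ref{bijpartition}/Corollary~\ref{propsurj} to pin down the combinatorial tree, and then use the $t$-charts to reconstruct each $a_v$ up to a projective isomorphism. The paper's version is a terse sketch of exactly this; you have simply spelled out the details (the explicit construction of $m_v$, the edge-compatibility check, and the independence from the choice of triple) that the paper leaves implicit.
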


\begin{proof}
Let $T$ be a tree of spheres marked by $X$.
For a fixed $t\in \rm  Trip_{X}$, the data of $\alpha_t(x)$ is sufficient to build the map $a_v$ when $t$ is separated by the vertex $v$ of $T$. Every internal vertex $v$ is adjacent to at least three edges, so we can always find an element of $\rm Trip_{X}$ separated by $v$. Thus, Theorem \ref{bijpartition} assures that the class of $T$ is uniquely determined.
\end{proof}

  \begin{remark}
  It follows that the map $\TopT$ is an homeomorphism onto its image that equips ${\MT_X}$ with a quasi projective variety structure which is the same as the one of ${\M_X}$ (via the identification).
\end{remark}


\subsection{Convergence notion}

We define as in \cite{A1} the notion of convergence of  a sequence of marked spheres to a marked tree of spheres as follows.

\begin{definition}[Convergence of marked spheres]
A sequence of marked spheres $x_n:X\to{\mathbb S}_n$
  converges to a \ts ${\cal T}^X$
  if for all internal vertex $v$
 of $\T^X$ , there exists a (projective) isomorphism $\phi_{n,v}:{\mathbb S}_n\to{\S}_v$
    such that $\phi_{n,v} \circ x_n$
     converges to $a_v$. 
     \end{definition}
We will use the notation $\displaystyle {x}_n\underset{\phi_n}\longrightarrow {\cal T}^X$ or simply ${x}_n\to {\cal T}^X$. 
This convergence notion is compatible with the topology.

\begin{lemma}\label{convquot1} 
Let $({\T}_n)_n$ and $({\T}'_n)_n$ be two sequences of spheres marked by $X$ and let ${\cal T}$ and ${\cal T}'$ be two trees of spheres marked by $X$.  
\begin{enumerate} 
\item\label{quot1} (quotient) \begin{itemize}
	\item if ${\T}\sim {\T}'$, then ${\T}_n\to {\T}\iff {\T}_n\to {\T}'$. 
	\item if ${\T_n}\sim {\T'_n}'$, then ${\T}_n\to {\T}\iff {\T}'_n\to {\T}$.
	\end{itemize}
 \item\label{quot2} (unicity of the limit) if ${\T}_n\to {\T}$ and ${\T}_n\to {\T'}$, then ${\T}\sim {\T}'$. \end{enumerate}
\end{lemma}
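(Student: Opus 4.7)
The plan is to handle part (1) by direct construction of the witnessing isomorphisms, and to reduce part (2) to the injectivity of the embedding $\TopT$ established in Lemma \ref{rempart2}. For the first item of part (1), I would unpack the isomorphism $\T \sim_{\mathcal M} \T'$: for each internal vertex $v$ of $T$ one has a projective isomorphism $m_v : \S_v \to \S_{M(v)}$ with $a_{M(v)} = m_v \circ a_v$. Given $\T_n \underset{\phi_n}{\to} \T$, I would set $\phi'_{n, M(v)} := m_v \circ \phi_{n, v}$; since $m_v$ is a continuous projective map, $\phi'_{n, M(v)} \circ x_n \to m_v \circ a_v = a_{M(v)}$, hence $\T_n \to \T'$, and the converse follows by applying this to $\mathcal M^{-1}$. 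For the second item the trees $\T_n, \T'_n$ are marked spheres (single internal vertex), so the isomorphism reduces to a Möbius map $m_n : \S_n \to \S'_n$ with $x'_n = m_n \circ x_n$; taking $\phi'_{n,v} := \phi_{n,v} \circ m_n^{-1}$ gives $\phi'_{n,v} \circ x'_n = \phi_{n,v} \circ x_n \to a_v$, so $\T'_n \to \T$.

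For part (2), the strategy is: prove that $\T_n \to \T$ forces $\alpha_t^{\T_n}(x) \to \alpha_t^{\T}(x)$ for every $(t,x) \in \rm Quad_X$, since the double convergence $\T_n \to \T$ and $\T_n \to \T'$ then yields $\TopT([\T]) = \TopT([\T'])$, and Lemma \ref{rempart2} concludes $\T \sim \T'$. To establish this convergence of $t$-charts, fix $t = (x_0, x_1, x_\infty) \in \rm Trip_X$, let $v$ be the unique internal vertex of $\T$ separating $t$, and write $\sigma_t^{\T} : \S_v \to \hat\C$ for the $t$-chart. From $\phi_{n,v} \circ x_n \to a_v$ I would compose with the fixed Möbius map $\sigma_t^{\T}$ to obtain $\sigma_t^{\T} \circ \phi_{n,v} \circ x_n(x_i) \to i$ for $i=0,1,\infty$. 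I would then introduce the unique Möbius $\rho_n \in \Aut(\hat\C)$ sending these three approximate images exactly to $0, 1, \infty$; since a Möbius map depends continuously on its values at three distinct points, $\rho_n \to \Id$, and by uniqueness of the $t$-chart of $\T_n$ one has $\sigma_t^{\T_n} = \rho_n \circ \sigma_t^{\T} \circ \phi_{n,v}$, whence
\[
\alpha_t^{\T_n}(x) = \rho_n \circ \sigma_t^{\T} \circ \phi_{n,v} \circ x_n(x) \longrightarrow \alpha_t^{\T}(x).
\]

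The main obstacle is precisely the step above: the projective isomorphisms $\phi_{n,v}$ supplied by the definition of convergence are not canonically normalized, so one must pass to the canonical normalization in order to extract a statement about the $t$-chart of $\T_n$ and match it against that of $\T$. The correcting Möbius $\rho_n$ makes this passage explicit; once $\rho_n \to \Id$ is granted, everything else is bookkeeping together with an appeal to the injectivity of $\TopT$ already proved in Lemma \ref{rempart2}.
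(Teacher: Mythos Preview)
Your argument for part~(1) is essentially the paper's, with the same direct composition of the isomorphism $\mathcal M$ into the witnessing charts $\phi_{n,v}$.

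For part~(2) you take a different route. The paper builds the isomorphism $\T\sim\T'$ by hand: for each internal vertex $v$ of $\T$ it picks a triple $t$ separated by $v$, lets $v'$ be the vertex of $\T'$ separating $t$, and observes that $\phi'_{n,v'}\circ\phi_{n,v}^{-1}$ converges to the fixed isomorphism $m_v:=(\sigma'_t)^{-1}\circ\sigma_t$ (the composition, read in the $t$-charts on both sides, is a M\"obius map sending three points near $0,1,\infty$ to three points near $0,1,\infty$, hence tends to the identity). You instead prove the forward implication of Lemma~\ref{ref1} first---$\T_n\to\T$ forces $\alpha_t^{\T_n}\to\alpha_t^{\T}$---and then invoke the injectivity of $\TopT$ (Lemma~\ref{rempart2}) to conclude $[\T]=[\T']$. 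Both arguments rest on the same normalization trick with $t$-charts; the paper's version is shorter but leaves implicit the verification that the collection $(m_v)_v$ actually assembles into a tree-of-spheres isomorphism (well-definedness of $v\mapsto v'$, compatibility $a_{v'}=m_v\circ a_v$), whereas your route delegates exactly that verification to Lemma~\ref{rempart2}, which already encodes it. There is no circularity: Lemma~\ref{rempart2} does not use the present lemma, and the piece of Lemma~\ref{ref1} you reprove here does not rely on part~(2) either. Your $\rho_n\to\Id$ step is correct and is the same observation the paper uses in its proof of Lemma~\ref{ref1}.
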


\begin{proof} If $\T_n\to_{\phi_n}\T$ and $\T'\sim_{\mathcal{M}}\T$ then $\T_n\to_{\phi'_n}\T'$ with ${\phi_{n,v}=m_v\circ\phi'_{n,v}}$.
In addition, if $\T_n\sim_{\mathcal{M}}\T'_n\to_{\phi'_n}\T$ then $\T_n\to_{\phi'_n\circ{\cal M}}\T$ which concludes the proof of point \ref{quot1}.

For point \ref{quot2}, suppose that $\T_n\to_{\phi_n}\T$ and $\T_n\to_{\phi'_n}\T'$. For every internal vertex $v$ of $\T$, chose a triple $t\in\rm{Trip}_X$ such that $v$ separates $t$. Denote by $v'$ the vertex of $\T'$ separated by the same $t$. The map $\sigma_t'\circ\phi_{n,v'}'^{-1}\circ\phi_{n,v}\circ\sigma_t^{-1}$ is a Moebius transformation that fixes $0,1$ and $\infty$, so it is the identity.
Thus $\phi_{n,v'}'^{-1}\circ\phi_{n,v}$ converges to an isomorphism $m_v$.
\end{proof}

\begin{proposition}\label{ref1}
The map $\TopT$ defines the same convergence notion as the one on trees of spheres on ${\MT_X}$, ie : $$ \T_n \to \T \text{ if and only if }\TopT([\T_n]){{\to}}\TopT([\T]).$$
\end{proposition}

\begin{proof} Lemma \ref{convquot1} assures that these two formulations are equivalent.
Suppose that $\T_n\underset{\phi_n}\longrightarrow \T.$ Let $t\in \rm Trip_{X}$. Let $x\in X$ which does not lie in $t$. Let $\sigma_{n,t}$ be the $t$-chart of $\T_n$. Let $\sigma_t$ be the $t$-chart of $\T$. Let $v$ be the vertex of $\T$ defined by $t$.
Then $m_n:=\sigma_t\circ\phi_{n,v}^{-1}\circ\sigma_{n,t}$ (cf the following diagram) is a Moebius transformation that fixes $0,1$ and $\infty$ so $m_n$ is the identity.\\
\centerline{
$\xymatrix{
 X \ar[r]^{a_n}   \ar@{-->}[rd]_{a_v}  &\S_n \ar[r]^{\sigma_{n,t}}   \ar[d] ^{\phi_{n,v}}& \S   \ar[d]^{m_n} \\
       &  \S_v  \ar[r]_{\sigma_t}  & \S 
  }$}\\
  Then we have $$\sigma_{n,t}\circ a_n (x)=m_n\circ\sigma_{n,t}\circ a_n(x)=\sigma_{t}\circ\phi_{n,v}\circ a_n(x)\to\sigma_t\circ a_v(x).$$
   Thus $\alpha_{n,t}\to\alpha_t$ so  $\TopT([\T_n])\to\TopT([\T]).$
  
If in addition $\TopT([\T_n])\to\TopT([\T])$, for all internal vertex $v$ of $\T$ denote by $t_v$ a triple that defines $v$ and $\sigma_{n,t_v}$ the $t_v$-chart of $\T_n$. Define $\phi_{n,v}:=\sigma_{n,t_v}^{-1}\circ\sigma_{t_v}$. Then we have $\phi_{n,v}\circ a_{n}\to a_v$.
\end{proof}

The notion of convergence of a sequence of trees of spheres is not used in this paper. Let us however make it explicit in the remark below.

\begin{remark}[Convergence of trees]\label{convgeneral} 
Let $(\T_n)_n$ be a sequence of trees marked by $X$ and $\T\in{\CMT_X}$.  For all $t\in \rm Trip_X$ we denote by $v_{n,t}$ the vertex of $\T^X_n$ separating $t$. By the definition of $\TopT$ we know that $(\T_n)_n$ converges to $\T$ if 

$$\forall t\in \rm Trip_X,\exists \phi_{n,v_{n,t}}\in {\rm Aut}(\S_{v_{n,t}},\S_{v_t}),\quad\phi_{n,{v_{n,t}}}\circ a_{n,v_{n,t}}\to a_v.$$ 
\end{remark}

 To conclude this section, we state the following useful property for which we recall a proof from \cite{A1} to make clear the independence with this latter paper.
 
 \begin{lemma} \label{noncomp}
Let $v$ and $v'$  be two distinct internal vertices of $\T^X$,  
and consider a sequence of marked spheres $(x_n)_n$ such that $x_n\underset{\phi_n}\longrightarrow \T^X$. 
Then the sequence of isomorphisms $(\phi_{n,v'} \circ \phi_{n,v}^{-1})_n$
converges locally uniformly outside $i_v(v')$ to the constant $ i_{v'}(v)$. 
\end{lemma}

\begin{proof}
Each vertex $v$ and $v'$ has three edges and every branch has at least a leaf, so there exist four marked points $\chi_1,\chi_2,\chi_3,\chi_4\in X$ 
such that $v$ separates $\chi_1$, $\chi_2$ and $v'$, and the vertex $v'$ separates $\chi_3$, $\chi_4$ and $v$.

We define for $j\in \{1,2,3,4\}$, 
\[\xi_j:=a_v(\chi_j),\quad \xi'_j:=a_{v'}(\chi_j),\quad 
\xi_{j,n}:=\phi_{n,v}\circ x_n(\chi_j)\quad \text{and}\quad 
\xi'_{j,n}:=\phi_{n,v'}\circ x_n(\chi_j).\]
From the hypothesis, $\xi_{j,n}\to \xi_j$ and $\xi'_{j,n}\to \xi'_j$ when $n\to \infty$. Moreover, $\xi_3=\xi_4=i_v(v')$ and $\xi'_1=\xi'_2=i_{v'}(v)$. 
After maybe post-composing $\phi_{n,v}$ and $\phi_{n,v'}$ by some automorphisms of $\S_v$ and $\S_{v'}$ converging to the identity when $n\to \infty$ (so who don't change the limit of $\phi_{n,v'}\circ \phi_{n,v}^{-1}$), we can suppose that for all $n$,
\[\xi_{1,n}=\xi_1,~\xi_{2,n}=\xi_2,~\xi_{3,n}=\xi_3,~\xi'_{1,n}=\xi'_1,~\xi'_{3,n}=\xi'_3\text{ and }\xi'_{4,n}=\xi'_4.\]

Now we consider  the projective charts $\sigma$ on $\S_v$ and $\sigma'$ on $\S_{v'}$ defined by :
\begin{enumerate}
\item $\sigma(\xi_1)=0$, $\sigma(\xi_2)=1$ and $\sigma(\xi_3)=\infty$;
\item $\sigma'(\xi'_1)=0$, $\sigma'(\xi'_4)=1$ and $\sigma'(\xi'_3)=\infty$.
\end{enumerate}
The Moebius transformation $M_n:=\sigma'\circ \phi_{n,v'}\circ \phi_{n,v}^{-1}\circ \sigma^{-1}$ fixes $0$ and $\infty$ and maps $\sigma(\xi_4)$ to $1$. Thus 
\[M_n(z) = \frac{z}{\lambda_n}\quad\text{with}\quad
\sigma(\xi_{4,n}) \underset{n\to \infty}\longrightarrow \infty.\]
Consequently, $M_n$ converges locally uniformly outside infinity to the constant map equal to zero. Thus, $\phi_{n,v'}\circ \phi_{n,v}^{-1}={\sigma'}^{-1}\circ M_n\circ\sigma$ converges locally uniformly to the constant $({\sigma'})^{-1}(0)=i_{v'}(v)$ outside $\sigma^{-1}(\infty)=i_v(v')$. 
 \end{proof}


\subsection{Compactness}

In this section we prove that 
$$\quad\TopT({\CMT_X})=\overline{{\TopT({\MT_X})}}.$$


The proof will be divided into two inclusions (Lemma \ref{ref0} and Lemma \ref{ref00}).

\begin{lemma}\label{ref0}
 The set ${\MT_X}$ is dense in ${\CMT_X}$. In particular we have $$\TopT({\CMT_X})\subseteq \overline{{\TopT({\MT_X})}}.$$
\end{lemma}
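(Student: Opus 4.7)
The plan is to produce, for each $\T \in \CMT_X$, a sequence of marked spheres converging to $\T$ in the tree-of-spheres sense; by Lemma \ref{ref1} this is equivalent to the statement $\TopT([\T_n]) \to \TopT([\T])$, hence yields the desired inclusion. I would argue by induction on the number $k$ of internal vertices of a representative $\T$. The base case $k=1$ is immediate: $\T$ is itself (the class of) a marked sphere, and the constant sequence converges to $\T$.

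For the inductive step, assume the result for trees of spheres with fewer than $k$ internal vertices, and suppose $\T$ has $k \geq 2$ internal vertices. Stability guarantees an edge $e = \{v_0,v_1\}$ joining two internal vertices. Cutting $T$ along $e$ splits it into two combinatorial subtrees; attaching a fresh leaf $*_i$ to record the cut gives two trees of spheres $\T^0$ and $\T^1$, marked by $X_0 \cup \{*_0\}$ and $X_1 \cup \{*_1\}$ respectively, where $X_0 := X \setminus B_{v_0}(e)$ and $X_1 := X \cap B_{v_0}(e)$, with $*_0$ placed at $i_{v_0}(e) \in \S_{v_0}$ and $*_1$ at $i_{v_1}(e) \in \S_{v_1}$. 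Each $\T^i$ has at most $k-1$ internal vertices, so the inductive hypothesis supplies sequences of marked spheres $y_n \to \T^0$ and $z_n \to \T^1$.

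Using Lemma \ref{convquot1} I may post-compose the $y_n$ and $z_n$ by Moebius transformations so that $y_n(*_0) = 0$ and $z_n(*_1) = \infty$. Choose $\epsilon_n \to 0$ slowly enough that the formula
$$x_n(x) := \begin{cases} y_n(x), & x \in X_0, \\ \epsilon_n \cdot z_n(x), & x \in X_1, \end{cases}$$
defines an injection $X \to \S$, hence a marked sphere. To verify $x_n \to \T$ one tests vertex by vertex. For an internal vertex $v$ on the $\T^0$-side of $e$, the convergence $y_n \to \T^0$ supplies $\phi_{n,v}: \S \to \S_v$ with $\phi_{n,v} \circ y_n \to a^0_v$; on $X_0$ this directly gives $\phi_{n,v} \circ x_n \to a_v$, while on $X_1$ one uses $\epsilon_n z_n(x) \to 0 = y_n(*_0)$ together with the fact that $\phi_{n,v}(0) \to a^0_v(*_0) = a_v(v_1) = a_v(x)$ (since $x$ and $v_1$ lie in the same branch from $v$). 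For a vertex $v$ on the $\T^1$-side, one composes the analogous isomorphism from $z_n \to \T^1$ with the rescaling $w \mapsto w/\epsilon_n$; the non-compactness result of Lemma \ref{noncomp} guarantees that on $X_0$ the rescaled composition pushes every non-gluing point to the collapsing value $a_v(v_0)$, matching $a_v$ on $X_0$.

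The main obstacle is the bookkeeping of normalizations: one must coordinate the Moebius representatives of $y_n$ and $z_n$ so that a single scaling parameter $\epsilon_n$ simultaneously realizes the degeneration along the edge $e$ as seen from both sides. The choice $y_n(*_0)=0$, $z_n(*_1)=\infty$ is what makes this coherent, since then $\epsilon_n$ pulls the $X_1$-points toward the collapsing point $0$ on the $\T^0$-side, and the rescaling $w\mapsto w/\epsilon_n$ pushes the $X_0$-points away to $\infty$ on the $\T^1$-side; once this is in place, the vertex-by-vertex check reduces to the inductive hypothesis together with Lemma \ref{noncomp}.
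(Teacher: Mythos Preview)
Your argument is correct and takes a genuinely different route from the paper. The paper inducts on the cardinality of the marked set: it orders $X=\{x_1,\ldots,x_{n_0}\}$, and at step $k$ extends a sequence of spheres marked by $X_k$ (converging on the convex hull ${\rm Conv}_k$) by inserting the new point $x_{k+1}$, splitting into three geometric cases according to where the new separating vertex sits relative to ${\rm Conv}_k$. You instead induct on the number of internal vertices, cut the tree along an internal edge, approximate each piece by the inductive hypothesis, and re-glue by a rescaling $w\mapsto\epsilon_n w$. This is precisely the \emph{plumbing} construction the paper names in the remark immediately following its proof of this lemma, and which it later uses in the proof of Proposition~\ref{revisol}. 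Your approach is structurally cleaner (a single divide-and-conquer step instead of three cases) and closer to the standard Deligne--Mumford picture; the paper's approach has the minor advantage of never changing the marking set and of working entirely on a single sphere $\S_n$ throughout. One point to tighten in your write-up: fixing only $y_n(*_0)=0$ and $z_n(*_1)=\infty$ leaves two Moebius degrees of freedom on each side, and you implicitly need the stronger normalization that $y_n$ is taken in the $\phi_{n,v_0}$-chart and $z_n$ in the $\phi_{n,v_1}$-chart (so that $y_n(X_0)$ stays bounded away from $0$ and $z_n(X_1)$ stays bounded) for the vertex-by-vertex check via Lemma~\ref{noncomp} to go through as written.
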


In order to prove this, we use the notion of convex hull :

\begin{definition}[Convex hull] For every combinatorial tree $T$ and every set of vertices $V'\subset T$, the convex hull of $V'$ is the sub tree consisting of the arcs connecting the elements of $V'$.
\end{definition}

Note that it is the smallest  subtree of $T$ containing $V'$ (connected hull).

\begin{proof}[Lemma \ref{ref0}] By Proposition \ref{ref1}, the two formulations are equivalents: it is sufficient to show that every tree of spheres marked by $X$ is the limit of spheres marked by $X$. 
Define $X=\{x_1,x_2,\ldots,x_{n_0}\}$. 
For $3\leq k\leq n_0$, define by $X_k:=\{x_1,\ldots,x_k\}$ and denote by $\rm Conv_k$ the set of vertices of valence greater then 3 of the convex hull of $X_k$ in $\T$.
 We prove by recurrence on $k$ that we can find a sequence of spheres $(\T_n)_n$ marked by $X_k$ and for all internal vertex $v\in \rm Conv_k$ a sequence of isomorphisms $\phi_{n,v}:\S_n\to\S_v$ such that $\phi_{n,v}\circ a_n\to a_v$ where $a_n$ will always denote the marking of $\S_n$ which is the sphere of the internal vertex of $\T_n$.
 
 If $k=3$, $\rm Conv_k$ has a unique vertex $v$. Take for all $n\in\N$ a sphere equipped with a complex structure $\S_n$ and some injection $a_n:X_k\to\S_n$. As $X_k$ has only three elements, there exists a unique isomorphism $\phi_{n,v}:\S_n\to\S_v$ such that $\phi_{n,v}\circ a_n$ and $a_v$ are equal on $X_k$. Thus we have $\phi_{n,v}\circ a_n\to a_v$.
 
 Suppose that the property is true for a given $k$ with $3\leq k<n_0$. Denote by $(\S_n)_n$ and $(\phi_{n,v})_{n\in\N,v\in \rm Conv_k}$ the sequences given by the recursive assumption. Let $v_0$ be the vertex of ${\rm Conv}_{k+1}$ which is the closest to $x_{k+1}$ (counting the number of vertices in $[v_0,x_{k+1}]$). 
 
 If $v_0\in \rm Conv_k$ then $\rm Conv_k=\rm Conv_{k+1}$. Define 
 $$U:=\bigcup_{n\in\N}\phi_{n,v_0}\circ a_n(X_k).$$
  As $\phi_{n,v}\circ a_n\to a_v$ and $B_{v_0}(x_{k+1})\cap X_k=\emptyset$, $U$ has a finite number of elements in a small enough neighborhood of $a_{v_0}(x_{k+1})$. Then we can chose an arbitrary sequence $(\zeta_n)_n$ of elements of $\S_{v_0}{\setminus} U$ such that $\zeta_n\to a_{v_0}(x_{k+1})$.
  We define $a'_n:X_{k+1}\to\S_n$ equal to $a_n$ on $X_k$ and such that $a'_n(x_{k+1}):=\zeta_n$. As $\zeta_n\notin U$, the map $a_n$ is an injection and we have $\phi_{n,v}\circ a'_n(x_{k+1})\to a_v(x_{k+1})$. In addition Lemma \ref{noncomp} assures that for every other vertex of $\rm Conv_k$ we have $\phi_{n,v}\circ a'_n(x_{k+1})\to a_v(x_{k+1})$.

  If $v_0\notin \rm Conv_k$, then either $v_0$ lies on an arc between two spheres or there exists a leaf $x\in X_k$ such that $x$ and $v_0$ are adjacent. 
    
 In the first case, take these two spheres $v_1\in B_1,v_2\in B_2$ of $\rm Conv_k$ where $B_1$ and $B_2$ are two distinct branches on $v_0$.
 Define $ X^i=B_i\cap X_k$. We know that $v_1$ lies in an arc $[z_1,z'_1]$ with $z_1,z'_1\in X^1$ and that $v_2$ lies in an arc $[z_2,z'_2]$. We define the triples $t_1:=(z_1,z'_1,z_2)$ separated by the sphere $v_1$  and $t_2:=(z_2,z'_2,z_1)$ separated by the sphere $v_2$. Recall that $\sigma_{t_\star}$ is the $t_\star$-chart of $\T$.
  If we define $M_n:=\sigma_{t_2}\circ\phi_{n,v_2}\circ\phi_{n,v_1}^{-1}\circ\sigma^{-1}_{t_1}$, from the choices of $t_\star$ we have $\forall \xi\in \hat\C,M_n(\xi)=\lambda_n/\xi$ with $\lambda_n\to\infty$.
  

  Define $$U_1:=\bigcup_{n\in\N}\sigma_{t_1}\circ\phi_{n,v_1}\circ a_n(X_k),$$
and $\xi_n:=\sqrt{\lambda_n}+\epsilon$ with $\epsilon\in \C$ independent of $n$ and chosen such that $(\xi_n)_n$ avoids $U_1$.
We define $a_n(x_{k+1}):=\phi_{n,v_1}^{-1}\circ\sigma^{-1}_{t_1}(\xi_n)$. Note that by definition, we have 
$$\phi_{n,v_1}\circ a_n(x_{k+1})\to a_{v_1}(x_{k+1}),$$
  $M_n(\xi_n)=\lambda_n/(\sqrt{\lambda_n}+\epsilon)\to\infty$, and we also have 
 $$\phi_{n,v_2}\circ a_n(x_{k+1})\to a_{v_2}(x_{k+1}).$$ 

Let $\phi_{n,v_0}:\S_n\to\S_{v_0}$ be the unique isomorphism such that $$\phi_{n,v_0}(a_n(z_1))=a_{v_0}(z_1), ~\phi_{n,v_0}(a_n(z_2))=a_{v_0}(z_2)$$ and $$\phi_{n,v_0}(a_n(x))=a_{v_0}(x).$$ 
Define $t:=(z_1,x,z_2)$ and let $\sigma_t$ be the $t$-chart of $\T$. Define $$N_n:=\sigma_{t_0}\circ\phi_{n,v_0}\circ\phi_{n,v_1}^{-1}\circ\sigma^{-1}_{t_1}.$$ We note that $\forall \xi\in\hat\C,N_n(\xi)=\xi/(\xi_n)$. As for all $x\in X^1, \sigma_{t_1}\circ \phi_{n,v_1}\circ a_n(x)$ converges to a finite limit, we have $$\sigma_t=\phi_{n,v_0}(a_n(x))=N_n(\sigma_{t_1}\circ \phi_{n,v_1}\circ a_n(x))\to 0=\sigma_t\circ a_{v_0}(x).$$ By the same kind of considerations on $v_2$, we prove that for every $v\in \rm Conv_k$, from Lemma \ref{noncomp}, we have 
$\phi_{n,v}\circ a'_n(x_{k+1})\to a_v(x_{k+1})$.

If there exists a leaf $x\in X_k$ such that $x$ and $v_0$ are adjacent, then $v_0$ is adjacent to a unique internal vertex $v_1$ of $\rm Conv_k$ and separates the vertices $x,v_0$ and $v_1$. We define $a_n(x_{k+1})$ as a sequence such that $\phi_{n,v_1}\circ a_n(x_{k+1})\to\phi_{n,v_1}(x_{k})$ and such that $a_n|_{X_{k+1}}$ is injective. We conclude as before by taking $\phi_{n,v_0}$ the unique isomorphism mapping the attaching points on $\S_n$ of the branches containing $x,x_k$ and $x'$ to the one of $\S_v$.
\end{proof}
  
 \begin{remark}
 Lemma \ref{ref0} can be proven by gluing spheres minus a finite number of points. This other method is called a  ``plumbing" and this point of view closer to the one of \cite{FT}. We will use this technic for example in the proof of Proposition \ref{revisol}.
 \end{remark}
 
 \begin{lemma}\label{ref00}
The set $\TopT({\CMT_X})$ is closed and $$ \overline{{\TopT({\MT_X})}} \subseteq  \TopT({\CMT_X}).$$
 \end{lemma}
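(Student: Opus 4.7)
The plan is to establish the inclusion ${\rm Ad}(\TopT(\MT_X))\subseteq \TopT(\CMT_X)$; combined with the reverse inclusion proved in Lemma \ref{ref0}, this gives $\TopT(\CMT_X)={\rm Ad}(\TopT(\MT_X))$, and since the right hand side is by definition closed in $\S^{\rm Quad_X}$, the closedness of $\TopT(\CMT_X)$ follows at once.

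To prove the inclusion, I would fix $p\in{\rm Ad}(\TopT(\MT_X))$ and choose a sequence $(\T_n)_n$ of spheres marked by $X$ such that $\TopT([\T_n])\to p$ in $\S^{\rm Quad_X}$. The goal is to exhibit a tree of spheres $\T\in\CST_X$ with $\TopT([\T])=p$.

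The key step is to invoke the sequential compactness result from \cite{A1}: any sequence of marked spheres admits a subsequence converging, in the convergence notion on trees of spheres, to some tree of spheres marked by $X$. Applying this to $(\T_n)_n$, I extract a subsequence $(\T_{n_k})_k$ and a tree of spheres $\T$ marked by $X$ with $\T_{n_k}\to\T$. Lemma \ref{ref1} then transports this convergence inside $\S^{\rm Quad_X}$, yielding $\TopT([\T_{n_k}])\to \TopT([\T])$. Since the full sequence $\TopT([\T_n])$ already converges to $p$ and $\S^{\rm Quad_X}$ is Hausdorff, every subsequential limit must equal $p$; hence $p=\TopT([\T])\in\TopT(\CMT_X)$, as desired.

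The main obstacle is the sequential compactness input from \cite{A1}, whose proof is nontrivial and constructs the limiting tree of spheres from a given sequence of marked spheres. Once that is granted, everything else is essentially formal: the uniqueness of limits in the Hausdorff space $\S^{\rm Quad_X}$, the translation of the two convergence notions provided by Lemma \ref{ref1}, and the extraction of closedness from equality with an adherence.
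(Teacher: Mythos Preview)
Your argument is logically correct, but it takes a different route from the paper's own proof. The paper does not invoke sequential compactness from \cite{A1}; instead it gives a direct, self-contained construction of the limiting tree of spheres. Given a sequence $(\T_n)_n$ of marked spheres with $\TopT([\T_n])$ convergent, the paper considers for each triple $t\in{\rm Trip}_X$ the limit map $a_t:=\lim \sigma_{n,t}\circ a_n:X\to\S$, reads off from each $a_t$ the partition $P_t$ of $X$ into fibres, and then verifies by hand (using Lemma~\ref{noncomp}) that the collection $\{P_t\}$ is an admissible set of partitions. Corollary~\ref{propsurj} then produces the combinatorial tree, and the maps $a_t$ supply the sphere structures and markings at its internal vertices; the convergence $\T_n\to_{\phi_n}\T$ is then immediate by construction.

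The paper's approach is self-contained and puts the partition machinery of Section~\ref{chap5} (Theorem~\ref{bijpartition} and its corollary) to genuine use. Your approach is shorter and conceptually clean, but it outsources the only substantive step to \cite{A1}, and the content of that step is essentially identical to what the paper proves here---the construction just described \emph{is} the standard argument for that sequential compactness. You should also verify carefully that \cite{A1} actually proves (rather than merely states) the compactness you need: later in this paper, Corollary~\ref{thmcomp00} is presented as a deduction of a theorem that was only \emph{admitted} in \cite{A1}, so there is a real risk of circularity in leaning on that reference for the core input.
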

 
\begin{proof}  
Let $(\T_n)_n$ be some sequence of spheres marked by $X$.   
For every $t\in \rm Trip_{X}$, we denote by $\sigma_{n,t}$ the $t$-chart of $\T_n$, then we have $\sigma_{t,n}\circ a_{n,t}$ converges to a map that we will denote by $a_{t}:X\to \hat\C$.  

Every $a_t$ defines a partition $P_t$ of $X$ which are classes of the following equivalence relation: $x\sim x'$ if and only if $a_t(x)=a_t(x')$.
We prove that the collection $\Pt$ of the $P_t$ for $t\in \rm Trip_{X}$ is an admissible set of partitions.

 -Property \ref{mt1}. Elements of $t$ have distinct images so $P_t$ contains at least three elements. 
 
-Property \ref{mt2}.
 Take $P_t\in\Pt$ and $B\in P_t$. By definition, for all element $x\notin B$, we have $a_t(x)\notin a_t(B)=\{\star\}$.
 Let $t_0$ be a triple of points with at least two elements of $B$. 
 From Lemma \ref{noncomp},
 the  partition $P_{t_0}$ has a set $B_0$ containing $X-B$.
  If $B_0=X{\setminus} B$ then we are done. 
  If not, $B_0\cap B\neq\emptyset$. Then we consider another triple $t_1\in {(X{\setminus} B)\times(B_0 \cap B)\times B}$ that contains an edge $B_1$ containing $X{\setminus} B$ but such that $\card(B_1)<\card(B_0)$. We continue until that ${\card(B_i)=\card(X{\setminus} B)}$.
 
-Property \ref{mt3}.
We first note that if $t$ is a triple of elements of $X$ in distinct subsets of $P_{t'}$, then we have $P_t=P_{t'}$. 
 Take $t_1$ and $t_2$ such that there exists $B\in P_{t_1}\cap P_{t_2}$ non empty and suppose by contradiction that $P_{t_1}\neq P_{t_2}$. 
  This provides the existence of a set $B_2\in P_{t_2}\setminus\{B\}$ and $x_1,x_2\in B_2$ such that $x_1$ and $x_2$ are in distinct elements of $P_{t_1}$. As $ P_{t_2}$ has at least three elements we take $x_3\notin B_2\cup B$. Take $x_B\in B$.
  Define $t'_2:=(x_1,x_3,x_B)$ and $t'_1=(x_1,x_2,x_3)$. According to the preceding remark, we have $P_{t_1}= P_{t'_1}$ and $P_{t_2}=P_{t'_2}$.
From Lemma \ref{noncomp}, as $a_{n,t'_2}(x_1)$ and  $a_{n,t'_2}(x_2)$ tend to the same limit, $a_{n,t'_1}(x_3)$ and  $a_{n,t'_1}(x_B)$ too. So, from the point of view of $P_{t_1}$, as we have $x_B\in B$, we deduce that $x_3\in B$ which is a contradiction.
  
According to corollary \ref{propsurj}, the set $\Pt$ determines a unique combinatorial tree (up to isomorphism) and, at each vertex, the associated partition corresponds to the associated partition at an $a_t$. Fix a combinatorial tree $T$ in this isomorphism class and for each of its internal vertices $v$ a triple  $t_v$ such that the partition of $a_{t_v}$ corresponds to the partition of $v$. Define $\phi_{n,v}=a_{n,t_v}$ and $ \S_v= \S $ for every internal vertex $v$. The tree $T$ equipped to the spheres $\S_v$ and the $a_v:=a_{t_v}$ is a tree of spheres $\T$ and by construction we have $\T_n\to_{\phi_n}\T$.
\end{proof}


\subsection{Proof of Theorem \ref{prop519}, comparison with \cite{FT}}

We have to check that the result of the previous section is exactly Theorem \ref{prop519}. We leave this as an easy  exercise  to the reader after recalling below the key steps and the notations of \cite{FT}. (All the labelings below make reference to the ones in \cite{FT}.)

\begin{enumerate}
\item Let $N\in\N$ with $N>3$. The virtual moduli space of type $(0,N)$ is denoted by $\VM(0,N)$ and identified with the set $\VConf(N,\hat\C)$, the set of isomorphism classes of the configuration space of $N$ distinct points on $\hat\C$.  (The set $\VConf(N,\hat\C)$ corresponds to $\M_E$ for $N:=\card(E)$.)

\item Proposition 1.5 states that $\Top|_{\M_E}$ is injective, so $\VConf(N,\hat\C)$ is identified with $\Top(\M_E)$.

\item Definition 1.7 defines a compactification $\VConf(N,\hat\C)^*$ of $\VConf(N,\hat\C)$ to be $\overline{\Top(\M_E)}$.

\item Definition 1.9 introduce a new set $\widehat{\VConf}(N,\hat\C)$ which contains $\VConf(N,\hat\C)$.

\item The set $\widehat{\VConf}(N,\hat\C)$ is identified to $\widehat{\VM}(0,N)$ which has a definition clearly equivalent to the one of ${\CMT}_E$ here. Definition 1.10 defines the topology on $\widehat{\VConf}(N,\hat\C)$ to be the one of $\widehat{\VM}(0,N)$.

\item Theorem 1.13 states that $\VConf(N,\hat\C)^*$ and $\widehat{\VConf}(N,\hat\C)$ are homeomorphic.

\end{enumerate}



\section{Isomorphism classes of covers}\label{chap6}

\subsection{Introduction}
In this subsection we define some notions that one can find with more details in \cite{A1} and we also prove a useful technical lemma.
%
%
%
%
%

The generalization of marked rational maps (defined in the introduction) is the notion of (holomorphic) cover between trees of spheres.
A cover ${\cal F}:{\cal T}^Y\to {\cal T}^Z$ between two trees of spheres marked by $Y$ and $Z$ is the following data
\begin{itemize}
\item a combinatorial tree map $F:T^Y\to T^Z$, 
\item for each internal vertex $v$ of $T^Y$ and $w:=F(v)$ of $T^Z$, an holomorphic ramified cover $f_v:{\S}_v\to {\S}_w$ that satisfies the following properties: 
\begin{itemize}
\item the restriction $f_v : {\S}_v-Y_v\to {\S}_w-Z_w$ is a cover, 
\item $f_v\circ i_v = i_w\circ F$,
\item if $e$ is an edge between two internal vertices $v$ and $v'$, then the local degree of $f_v$ at $i_v(e)$ is the same as the local degree of $f_{v'}$ at $i_{v'}(e)$. 
\end{itemize}
\end{itemize}
In \cite{A1}, it is independently proven that a cover ${\cal F}$ between trees of spheres is such that $F$ is surjective and there is a global degree, denoted by $\deg({\cal F})$.

We will denote by $\RevTC_{\bf F}$ the set of covers between trees of spheres with portrait ${\bf F}=(F|_Y,\deg|_Y)$ and $\RevT_{\bf F}$ the set of covers between two trees that have a unique internal vertex (we respectively talk about covers between trees of spheres marked by ${\bf F}$ and of covers between spheres marked by ${\bf F}$). The set $\RevT_{\bf F}$ is naturally identified with the space $\Rat_{\bf F}$ defined in the introduction.


\subsection{Isomorphisms of covers between trees}

\begin{definition}[Isomorphism between covers]
An isomorphism between two covers between trees of spheres $\F_1:\T_1^Y\to \T_1^Z$ and $ \F_2:\T_2^Y\to \T_2^Z$ is a pair of isomorphisms between trees of spheres $(\mathcal M^Y,\mathcal M^Z)$ such that:
\begin{itemize}
\item$ \T_1^Y\sim_{\mathcal M^Y} \T_2^Y$
and $ \T_1^Z\sim_{\mathcal M^Z} \T_2^Z$;
\item for all the vertices $v_1\in T_1^Y$, $v_2:=M^Y(v_1)\in T_2^Y$, $w_1:=F_1(v_1)\in T_1^Z$ and $w_2:=F_2(v_2)\in T_2^Z$, the following diagram commutes:

\centerline{
$\xymatrix{
    \S_{v_1}\ar[d]_{f_{1,v_1}} \ar[r]^{m^Y_{v_1}}&\S_{v_2}\ar[d]^{f_{2,v_2}}\\
    \S_{w_1}\ar[r]^{m^Z_{w_1}}& \S_{w_2}.\\
  }$}
\end{itemize}
\end{definition}

Thus we write $\F_1\sim \F_2$ or $\F_1\sim_{(\mathcal M^Y,\mathcal M^Z)} \F_2$. As $\mathcal M^Y$ and $\mathcal M^Z$ are invertible, it is an equivalence relation. Equivalence classes of this relation are called  Isomorphism classes of covers between tree of spheres. 

Note that two covers between trees of spheres which are isomorphic have the same degree and same portrait. Thus we can talk about the degree and the portrait of an isomorphism class of covers between trees of spheres. 

We denote by $\revTC_{{\bf F}}$ the quotient of $\RevTC_{{\bf F}}$ by this equivalence relation and $\revT_{{\bf F}}$ the one of $\RevT_{{\bf F}}$.


\subsection{Marked covers, projections and topology}

Recall that $\CST_X$ denote the set of trees of spheres marked by $X$.
Define $$\I:\RevTC_{{\bf F}}\to \CST_Y\times\CST_Z$$ that associate $(\T^Y,\T^Z)$ to $\F:\T^Y\to \T^Z$.
We prove the following proposition by recurrence on the cardinal of $Y$.

\begin{proposition} \label{equivmod}  
The map $\I:\RevTC_{{\bf F}}\to \CST_Y\times\CST_Z$ is an injection.
It descends to an injective map in the quotient:
$$[I]:\revTC_{{\bf F}}\to {\MT}_Y\times{\MT}_Z.$$
\end{proposition}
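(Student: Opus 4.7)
The plan is to prove injectivity of $\I$ by induction on $|Y|$, then deduce injectivity of $[\I]$ by compatibility with the equivalence relations. Suppose $\F^1, \F^2 \in \RevTC_{\bf F}$ share source $\T^Y$ and target $\T^Z$; the goal is to show $\F^1 = \F^2$. A preliminary step establishes combinatorial uniqueness: both tree maps $F^j : T^Y \to T^Z$ coincide with $F$ on leaves, and at each internal vertex $v \in T^Y$, the image $F^j(v)$ together with the restriction $F^j|_{E_v}$ to adjacent edges is forced by the partition structure of the trees (via the admissibility-based uniqueness of Theorem \ref{bijpartition}); moreover, the edge degrees along any internal edge are forced by the local degree-sum condition $\sum_{e \in E_v} \deg(e) = d$ with the leaf contributions already fixed by the portrait.

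Base case of the induction: if $T^Y$ has a single internal vertex $v$, so does $T^Z$ (with $F^j(v) = w$), and each $\F^j$ is a rational map $f_v^j : \S_v \to \S_w$ of degree $d$. The portrait prescribes, for each $z \in Z$, the complete preimage divisor $\sum_{y \in F^{-1}(z)} \deg(y) \cdot a_v(y)$ (completeness from $\sum \deg(y) = d$). Since $|Z| \geq 3$ by stability, choose projective charts on $\S_w$ sending three values of $a_w(Z)$ to $0$, $1$, $\infty$: writing $f_v^j = P_j / Q_j$ in these charts, the divisors over $0$ and $\infty$ determine $P_j, Q_j$ up to scalars, and the divisor over $1$ pins the ratio of scalars, so $f_v^1 = f_v^2$.

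Inductive step: if $T^Y$ has at least two internal vertices, its internal vertices form a subtree admitting a peripheral internal vertex $v_0$ with exactly one internal-vertex neighbor $v_1$. At $v_0$, the attaching points $i_{v_0}(E_{v_0})$ decompose as leaves of $Y$ adjacent to $v_0$ (with portrait-prescribed preimages) together with the single attaching point of the internal edge to $v_1$ (with prescribed image and local degree from the combinatorial preliminary). Since $|E_{v_0}| \geq 3$ by stability, the same rigidity argument as in the base case applies at $v_0$, giving $f_{v_0}^1 = f_{v_0}^2$. Pruning the branch at $v_0$ produces a strictly smaller tree $\T^{Y'}$ with an analogous pruning $\T^{Z'}$ of $\T^Z$ and a reduced portrait $\bf F'$; the inductive hypothesis applies to the remaining cover, giving $\F'^1 = \F'^2$, and combining with $f_{v_0}^1 = f_{v_0}^2$ yields $\F^1 = \F^2$.

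For the quotient: the equivalence on $\RevTC_{\bf F}$ is by definition a pair $(\mathcal M^Y, \mathcal M^Z)$ of tree-of-spheres isomorphisms of source and target intertwining the covers, so $\I$ descends to a well-defined $[\I]$ on $\revTC_{\bf F}$. Given $[\I](\F^1) = [\I](\F^2)$ witnessed by isomorphisms $(\mathcal M^Y, \mathcal M^Z)$, pulling back $\F^2$ by these yields a cover sharing source and target with $\F^1$, which equals $\F^1$ by the injectivity above; hence $\F^1 \sim \F^2$. The main obstacle is making the inductive pruning rigorous: constructing the reduced portrait $\bf F'$ with valid Riemann--Hurwitz counts, ensuring the pruned trees remain stable, and confirming that the sub-cover at $v_0$ satisfies the rigidity hypothesis with at least three independent prescribed preimage divisors in $\S_{F(v_0)}$.
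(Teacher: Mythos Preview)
Your overall strategy---induction on $|Y|$, locating a peripheral internal vertex, and using rigidity of a rational map from three full preimage divisors---matches the paper's, and your quotient argument at the end is correct. However, two steps are not adequately handled.

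First, the ``preliminary step'' is not justified as written. Theorem~\ref{bijpartition} concerns the bijection between isomorphism classes of trees and admissible sets of partitions; it says nothing about uniqueness of tree \emph{maps} between given trees. The paper does not attempt any global combinatorial preliminary: it determines $F(v)$ locally inside the induction (the image of the peripheral vertex $w_0$ is forced to be the unique internal vertex of $T^Z$ adjacent to the leaf $F(y)$, for any leaf $y$ adjacent to $w_0$). Your degree formula $\sum_{e\in E_v}\deg(e)=d$ is also wrong: the actual constraint is $\sum_{e:F(e)=e'}\deg(e)=\deg(f_v)$ for each edge $e'$ of $F(v)$, and $\deg(f_v)$ need not equal the global degree $d$.

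Second---and this is the real gap you flag yourself---your pruning is on the wrong side of the cover. Removing the single vertex $v_0$ from $T^Y$ does not yield a sub-cover: the image $w=F(v_0)$ may still have other preimages in the remaining tree, so there is no consistent ``analogous pruning'' of $T^Z$, and the reduced portrait ${\bf F}'$ you want cannot be defined. The paper prunes on the $T^Z$ side instead: it removes an entire branch $B=B_{v'}(e)$ of $T^Z$ (the one through $v=F(w_0)$) and deletes the \emph{full preimage} $F^{-1}(B)$ from $T^Y$, invoking Lemma~\ref{restric} to guarantee that the remainder is a genuine cover between trees of spheres with strictly fewer leaves. For this to work one must first determine $f_w$ at \emph{every} preimage $w$ of $v$, not only at $w_0$; the paper does so by exhibiting three attaching points of $\S_v$ whose full preimages on each such $w$ are known. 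The tool you are missing is precisely Lemma~\ref{restric}.
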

 
The proof of this proposition follows essentially from the following fact: two maps from the Riemann sphere to itself such that preimages of three distinct points coincide (with multiplicity) are equals.
First we prove the  following lemma: 

\begin{lemma}\label{lemmbout}
Every tree of sphere is either a marked sphere or it has an internal vertex which is adjacent to exactly another one.
\end{lemma}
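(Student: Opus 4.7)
The plan is to reduce this to a classical fact about finite trees: any finite tree with at least two vertices has a vertex of degree one (a leaf). The trick is to apply this to an appropriately chosen subtree of the combinatorial tree $T$.

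First I would dispatch the easy case: if $\T$ has exactly one internal vertex, then by the identification recalled in the remark after the definition of $\CMT_X$, $\T$ is (identified with) a marked sphere and we are done. So we may assume that $\T$ has at least two internal vertices.

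Next, I would consider the sub-graph $T^{\rm int}$ of $T$ obtained by keeping only the internal vertices and the edges between them (discarding the leaves $X$ together with the edges attaching them). The key observation is that $T^{\rm int}$ is still connected: given two internal vertices $v, v'$, the unique path between them in $T$ cannot pass through a leaf, because a leaf has valence $1$ and a path through a vertex requires at least two incident edges used by the path. Hence this path lies entirely in $T^{\rm int}$, giving connectivity. Since $T^{\rm int}$ inherits the acyclic property from $T$, it is itself a finite tree with at least two vertices.

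Now I would invoke the standard fact that a finite tree with $\geq 2$ vertices has at least two vertices of degree $1$ (in itself). Pick any such leaf $v$ of $T^{\rm int}$: by construction $v$ is an internal vertex of $T$ and it is adjacent, \emph{within} $T^{\rm int}$, to exactly one other vertex, which is precisely another internal vertex of $T$. This gives the desired conclusion.

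The only mild subtlety (and the step I would double check) is the verification that $T^{\rm int}$ is connected, i.e.\ that removing leaves does not disconnect the tree; but as noted above this follows immediately from the fact that a leaf has valence one and therefore cannot be an interior vertex of any path. No further input (in particular, no use of the sphere data $\St_v$ or the markings $i_v$) is needed — the statement is purely about the combinatorial tree $T$ underlying $\T$.
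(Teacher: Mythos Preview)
Your proof is correct. The paper's argument takes a different route: it picks a leaf of $T$, considers a path of maximal length starting there, observes that the far endpoint $v_k$ must again be a leaf, and then shows that the penultimate vertex $v_{k-1}$ is adjacent to only one internal vertex --- otherwise one could pass through a second internal neighbor $v'_k$ of $v_{k-1}$ and then to a further neighbor $v'_{k+1}$ of $v'_k$, producing a strictly longer path and contradicting maximality.

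Your approach is more structural: you pass to the induced subtree $T^{\rm int}$ on the internal vertices and invoke the standard fact that any finite tree with at least two vertices has a leaf. The two arguments are closely related (the classical existence-of-leaves fact is itself typically proved by a maximal-path argument), but your packaging separates the reduction from the graph-theory lemma cleanly, while the paper's version is self-contained and avoids naming the auxiliary subtree. Either way, only the combinatorics of $T$ is used; the sphere data plays no role.
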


\begin{proof}
Indeed, consider a leaf and an arc from this leaf which has a maximal number of edges.
If this arc is empty, then there is only one vertex so we don't have to consider this case.  The case where the tree has only two vertices is similar. Thus we suppose that we are not in these cases.

Then the arc has the form $$C=[v_1,v_2,\ldots,v_{k-1} ,v_{k} ].$$with $v_{k-1}\neq v_1$. Note that $v_k$ is necessarily  a leaf because, if not, it will have an edge connecting it to another vertex that allows to extend the arc. 
If $v_{k-1}$ does not satisfies the property then $v_{k-1}$ is adjacent to another internal vertex $v'_k$ that doesn't lie in the arc. As $v'_k$ is an internal vertex, it is also adjacent to a vertex $v'_{k+1}$ that doesn't lie in the arc. Then $C'=[v_1,v_2 ,\ldots,v_{k-1} ,v'_{k} ,v'_{k+1} ]$ would be an arc longer than $C$.  Thus $v_{k-1}$ satisfies the desired property.
\end{proof}

\begin{proof} (Proposition \ref{equivmod})
We prove this result by induction on the cardinal of $Y$. 

We begin with the case $\card(Y)=3$. Take $\F:\T^Y\to \T^Z$ in  $\RevTC_{{\bf F}}$. We prove that $\F$ is uniquely determined by $\I(\F)$.
If $Y$ has only three elements then $T^Y$ has a unique internal vertex $v$. Then $T^Z$ has only one internal vertex $v'$ which is the image of $v$. The combinatorial tree map is uniquely determined. Moreover, as $Z$ has three elements and as we know all their preimages, we know the preimages of three attaching points of three edges on $\S_{v'}$ by $f_{v}$. So $f_{v}$ is also uniquely determined.

Let $Y$ be a set of cardinal $n>3$.
Take $\F:\T^{{Y}}\to \T^{{Z}}$ in $\RevTC_{{\bf F}}$. Suppose that we know $(\T^Y,\T^Z)$ and we prove that $\F$ is uniquely determined. 

If $T^Y$ has only one internal vertex then we do the same proof as before. We suppose that it is not the case.
According to Lemma \ref{lemmbout}, $T^Y$ has an internal vertex $v$ adjacent to a unique internal vertex. Let $y$ be a leaf adjacent to $v$ (it exists because $T^Y$ is stable). The image of $v$ is necessarily adjacent to $z:=F(y)$ which is a leaf. Hence $w:=F(v)$ is uniquely determined. As $T^Y$ has more than one internal vertex, $w$ is adjacent to an internal vertex $w'$. The preimages of $w$ are the vertices adjacent to the preimages of the $z$. Similarly the preimages of $w'$ are all the internal vertices adjacent to $v$. Thus the preimages of $w$ and $w'$ are uniquely determined. 

Now suppose that $v$ is a preimage of $w$. Given that $T^Z$ is stable, $w$ is adjacent to some $z''\in Z-\{z\}$. So we know the preimages of two of its points by $f_v$. Define $e:=\{w,w'\}$. As we know the preimages of $w'$ and of $w$, the preimages by $f_v$ of the attaching point of $e$ on $w$ are the attaching points on $v$ of the edges of $v$ connecting $v$ to some internal vertices. As we know the preimages of three distinct points of $w$ by $f_v$, the map $f_v$ is uniquely determined. 

Thus the preimage by $F$ of $B:=B_{w'}(e)$ is uniquely determined. Define $T'':=V^Z{\setminus} B$ and $T':=T^Y{\setminus} F^{-1}(B)=F^{-1}(T'')$. Now we prove that $F|_{T'}$ is uniquely determined. 

Let $E\subset T^Z$ be the set of the leaves adjacent to $w$ and of the the edges adjacent to them. The graph $T'':=T^Z\setminus E$ is clearly a tree and we know the preimages of its leaves by $F$. Let $T'$ be any connected component of $T^Y$ minus the preimages of $E$. This is a tree whose leaves are the elements of $Y\setminus F^{-1}(E\cap Z)$ and the preimages of $w'$, thus we know the map $F$ on its set of leaves. We define $\T''$ to be the tree of spheres of combinatorial tree $T'$ and for which the spheres associated to the internal vertices and the attaching points of edges are the same as the one for $\T^Y$. Similarly we define $\T''$ from $T'$ and $\T^Z$.

It is easy to check that the natural restriction $\F':\T'\to\T''$ of $\F$ is a cover between trees of spheres. As the set of leaves of $T''$ has less elements than the one of $T^Y$ and as we know the portrait of $\F'$, the induction allows reconstruct $\F:\T'\to\T''$ from the pair $(\T',\T'')$.
\end{proof}

Denote by $\pi_1$ the projection on the first coordinate. 

\begin{definition}
We define the map $$\MI: \revTC_{{\bf F}}\to {\CMT}_Y
\text{ by setting }   \MI:=\pi_1\circ[I].$$
\end{definition}

\begin{proposition}\label{Iinject}
The map $\MI: \revTC_{{\bf F}}\to {\CMT}_Y$ is injective.
\end{proposition}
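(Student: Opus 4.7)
The plan is to leverage Proposition \ref{equivmod}: since $[I]$ is injective, it suffices to show that if $[\F_1],[\F_2]\in\revTC_{\bf F}$ satisfy $[\T_1^Y]=[\T_2^Y]$ in $\MT_Y$, then in fact $[\F_1]=[\F_2]$. Informally, once the isomorphism class of the source $\T^Y$ is fixed, the entire cover is pinned down up to isomorphism of covers.

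First I would reduce to the case $\T_1^Y=\T_2^Y$. Given an isomorphism of marked trees of spheres $\mathcal M^Y:\T_1^Y\to\T_2^Y$, the cover $\F_2\circ\mathcal M^Y:\T_1^Y\to\T_2^Z$ is isomorphic to $\F_2$ via $(\mathcal M^Y,\Id)$, so I may assume both covers share the source $\T^Y:=\T_1^Y$. It then remains to build an isomorphism $\mathcal M^Z:\T_1^Z\to\T_2^Z$ such that $(\Id,\mathcal M^Z)$ is an isomorphism from $\F_1$ to $\F_2$.

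I would proceed by induction on $\card Y$, following the pattern of the proof of Proposition \ref{equivmod}. For the base case, when $T^Y$ has a single internal vertex $v$, $T_i^Z$ also has a single internal vertex $w_i:=F_i(v)$ (the partition structure of $Z$ is forced by that of $Y$ and the portrait, via Theorem \ref{bijpartition}). The rational maps $f_{i,v}:\S_v\to\S_{w_i}$ share the same preimages with multiplicity of every marked point: $f_{i,v}^{-1}(a^Z_{w_i}(z))=\{a^Y_v(y)\mid F(y)=z\}$ with multiplicities $\deg(y)$. Stability of $T^Z$ forces $\card Z\geq 3$, and the classical rigidity that a degree-$d$ rational map is determined by the preimages with multiplicity of three distinct points yields a unique Moebius isomorphism $m_v:\S_{w_1}\to\S_{w_2}$ with $f_{2,v}=m_v\circ f_{1,v}$. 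The identity $m_v\circ a^Z_{w_1}=a^Z_{w_2}$ then follows by evaluating at $f_{1,v}(a^Y_v(y))$ for any $y\in F^{-1}(z)$, so $m_v$ is an isomorphism of marked spheres and supplies the required $\mathcal M^Z$.

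For the inductive step, if $T^Y$ has at least two internal vertices I would apply Lemma \ref{lemmbout} to pick an internal vertex $w_0$ of $T^Y$ adjacent to exactly one other internal vertex. The base-case argument applied at $w_0$ produces a Moebius isomorphism $m_{w_0}:\S_{F_1(w_0)}\to\S_{F_2(w_0)}$. Using Lemma \ref{restric} I would restrict each $\F_i$ to the complementary subtree of $\T^Y$ (replacing the pruned portion by a single leaf at the attaching point), obtaining covers with strictly fewer $Y$-leaves to which the inductive hypothesis applies, and then glue the resulting $\mathcal M^Z$-isomorphism together with $m_{w_0}$ along the connecting edge. The main obstacle is verifying consistency of this gluing at the attaching point on the neighbour of $w_0$: the values prescribed by the inductive isomorphism on the complementary subtree must agree with those prescribed by $m_{w_0}$ on the pruned side. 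This compatibility follows from the cover relation $f_v\circ i_v=i_w\circ F$ combined with the uniqueness clause in the rigidity of rational maps applied at the neighbouring vertex $v_0$.
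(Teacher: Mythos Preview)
Your approach is essentially the paper's: both argue by induction on $\card Y$, invoking Lemma~\ref{lemmbout} to locate a peripheral internal vertex, Lemma~\ref{restric} to prune and reduce, and the rigidity principle that a rational map is determined by the preimages (with multiplicity) of three points. The reduction to a common source $\T^Y$ and the base case are handled the same way.

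The one organizational difference worth noting is that the paper runs the argument in two passes rather than a single intertwined induction. First it reconstructs the \emph{combinatorial} data---the tree $T^Z$ and the tree map $F$---from $\T^Y$ and the portrait alone, using the inductive scheme you describe. Only then does it treat the analytic data: for each internal vertex $w$ of $T^Z$ it observes that fixing the attaching points of any three edges on $\S_w$ pins down every $f_v$ with $F(v)=w$ uniquely, and hence pins down all remaining attaching points as $f_v(e'_v)$. Because the spheres are then handled vertex-by-vertex in $T^Z$ after the combinatorics are already settled, there is no gluing to check: the consistency issue you flag at the attaching edge simply does not arise. Your single-induction version is correct but has to carry that compatibility check through the recursion; the paper's separation of the combinatorial and projective layers buys a cleaner endgame.
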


\begin{proof} Take $\F:\T^Y\to\T^Z$  in $\RevTC_{{\bf F}}$. Let $v_0$ be a vertex given by Lemma \ref{lemmbout}. Let $v'_0$ be its image. Let $V_0$ be the set of the leaves adjacent to $v_0$. The portrait $({ F}, \deg)$ determines the images of the elements of $V_0$ that have to be adjacent to $v'_0$. The other preimages of $v_0$ are adjacent to the elements of ${ F}^{-1}\circ{ F}(V_0)$. As we did in the proof of Proposition \ref{equivmod}, we determine the vertices of the tree $T^Z$ and the map $F$ from the data of $T^Y\setminus F^{-1}(v'_0)$ by induction on the number of vertices of $T^Y$.

Thus, it is possible to reconstruct   the combinatorial tree $T^Z$ and the combinatorial tree map from $\T^Y$. We prove that the attaching points of the edges of $T^Z$ on the vertices of $\T^Z$ are well determined up to post-composition by automorphisms. For this, it is sufficient to show that for each internal vertex $v$ of $T^Z$, the attaching points of $E_v$ are determined by the data of three of them.

For every internal vertex $v$ of $T^Z$, we suppose that we know the attaching points $z_0,z_1$ and $z_\infty$ of three distinct edges $e_0,e_1,e_\infty$ on $v$. For every preimage $w$ of $v$, there exists a unique holomorphic cover $ f_w:\S_w\to \S_v$ mapping the preimages of the edge $e_0$ (resp. $e_1,e_\infty$) on $z_0$ (resp. $z_1,z_\infty$). If $e$ is an edge on $v$ then $e$ has a preimage $e'$ on $w$ so its attaching point has to be $f_w(e'_w)$.
\end{proof}

We define a topology on the set of isomorphism classes of the covers between trees of spheres via the map $\MI$. 


\subsection{Convergence notion}

We define the notion of convergence of a sequence of marked spheres covers to marked cover between trees of spheres as follows.

\begin{definition}[Non dynamical convergence]
 A sequence of ${{ \F}_n:=(f_n,a_n^Y,a_n^Z)\in \RevT_{{\bf F}}}$ converges to ${ \F}:{\T}^Y\to { \T}^Z$ be in $ \RevTC_{{\bf F}}$ if and if for all pair of internal vertices $v$ and $w:=F(v)$, there exists sequences of isomorphisms $\phi_{n,v}^Y:\S_n^Y\to \S_v$ and $\phi_{n,w}^Z:\S_n^Z\to \S_w$ such that 
\begin{itemize}
\item $\phi_{n,v}^Y\circ a_n^Y:Y\to \S_v$ converges to $a_v^Y:Y\to \S_v$, 
\item $\phi_{n,w}^Z\circ a_n^Z:Z\to \S_w$ converges to $a_w^Z:Z\to \S_w$ and 
\item $\phi_{n,w}^Z\circ f_n\circ (\phi_{n,v}^Y )^{-1}:\S_v\to \S_w$ converges locally uniformly outside $Y_v$ to ${f_v:\S_v\to \S_w}$. 
\end{itemize}
\end{definition}

We use the notation $\F_n\rightarrow \F$ or $\F_n\underset{(\phi^Y_n,\phi^Z_n)}\longrightarrow  \F.$

Let us prove that this topology is compatible with the convergence notion.

\begin{lemma}\label{convquot} 
Let $(f_n)_n$ and $(f'_n)_n$ be two sequences in $ \RevT_{{\bf F}}$. For ${\F}$ and $\F'$ be in $ \RevTC_{{\bf F}}$ we have: 
\begin{enumerate} 
\item\label{quot111} (quotient) \begin{itemize}
	\item if ${\F}\sim {\F}'$, then ${f}_n\to {\F}\iff {f}_n\to {\F}'$. 
	\item if ${f_n}\sim {f'_n}$, then ${f}_n\to {\F}\iff {f'}_n\to {\F}$.
	\end{itemize}
 \item\label{quot222} (unicity of the limit) if ${f}_n\to {\F}$ and ${f}_n\to {\F'}$, then ${\F}\sim {\F}'$. \end{enumerate}
\end{lemma}

\begin{proof}
If $f_n\underset{(\phi^Y_n,\phi^Z_n)}\longrightarrow\F$ and $\F\sim_{(\mathcal M^Y,\mathcal M^Z)} \F'$ then $$f_n\underset{(\psi^Y_n,\psi^Z_n)}\longrightarrow\F'\text{ with }\psi^\star_{n,v}:=M^{\star}_v\circ\phi^\star_{n,v}.$$ 
Moreover, suppose that ${f'_n}\sim_{(\mathcal M_n^Y,\mathcal M_n^Z)} {f_n}$, $$\text{if }f_n\underset{(\phi^Y_n,\phi^Z_n)}\longrightarrow\F\text{ then }{f'_n\underset{(\mathcal M^Y\circ\phi^Y_n,\mathcal M^Y\circ\phi^Z_n)}\longrightarrow\F}.$$
So this convergence notion is well behaved under the quotient.

For \ref{quot222}, we suppose that $f_n\underset{(\phi^Y_n,\phi^Z_n)}\longrightarrow\F$ and $f_n\underset{(\psi^Y_n,\psi^Z_n)}\longrightarrow\F'$ then $${\F\sim_{(\mathcal M^Y,\mathcal M^Z)} \F'}\text{ with }m^{\star}_v:=\lim_{n\to\infty}\psi^\star_{n,v}\circ(\phi^\star_{n,v})^{-1}.$$
Indeed, $(a^\star_{n,v})_n$ converges to $a^\star_v$ and $((\psi^\star_{n,v}\circ(\phi^\star_{n,v})^{-1})^\star\circ a^\star_{n,v})_n$ converges to $a'^\star_v$ on $Y$ which contains at least three points, so $m_v$ is an isomorphism.
\end{proof}

\begin{corollary}\label{sensdirect}
The convergence notion defined on $\RevTC_{\bf F}$ implies the one given by the topology given by $\MI$ :
$$\text{if } f_n \to \F \text{ then }\MI([f_n]){{\to}}\MI([\F]).$$
\end{corollary}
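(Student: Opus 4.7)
The plan is to reduce the statement to the $Y$-component of the convergence of covers and then invoke Lemma \ref{ref1}. By construction, $\MI=\pi_1\circ[I]$, so for the marked sphere cover $f_n=(f_n,a_n^Y,a_n^Z)$ the image $\MI([f_n])$ is the class in $\MT_Y$ of the marked sphere $a_n^Y:Y\to\S_n^Y$, viewed as a tree of spheres with a single internal vertex; similarly $\MI([\F])=[\T^Y]$. So it suffices to show that $\T_n^Y\to \T^Y$ as trees of spheres marked by $Y$, and then to translate this to convergence of the $\TopT_Y$-images in $\S^{\rm Quad_Y}$.

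The first step is essentially free: unpacking the definition of $f_n\to \F$, one of the three bullet-point conditions is precisely that for every internal vertex $v$ of $\T^Y$ there exist isomorphisms $\phi_{n,v}^Y:\S_n^Y\to\S_v$ such that $\phi_{n,v}^Y\circ a_n^Y\to a_v^Y$. But this is exactly the definition of $\T_n^Y\underset{\phi_n^Y}\longrightarrow \T^Y$ in the sense of convergence of marked spheres to a tree of spheres. (The fact that the convergence and its limit are well defined on isomorphism classes is guaranteed by Lemma \ref{convquot}.)

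Now I apply Lemma \ref{ref1} to the source side: it gives $\TopT_Y([\T_n^Y])\to\TopT_Y([\T^Y])$ in $\S^{\rm Quad_Y}$. Since the topology on $\revTC_{\bf F}$ is by definition the one pulled back via $\MI$ from the topology on $\MT_Y$, which in turn is pulled back via $\TopT_Y$ from $\S^{\rm Quad_Y}$, this chain of equalities $\MI([f_n])=[\T_n^Y]$ and convergence $[\T_n^Y]\to[\T^Y]$ in $\MT_Y$ is exactly the assertion $\MI([f_n])\to\MI([\F])$ that has to be proved.

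There is no real obstacle here: the corollary is a direct consequence of Lemma \ref{ref1} once one notices that convergence of covers is defined as convergence of the source marking plus convergence of the target marking plus local uniform convergence of the maps, and only the first of these three ingredients is needed to conclude. The content is that the target and dynamical information are discarded by $\MI$, so the convergence immediately projects. (The converse implication, which would make $\MI$ a homeomorphism onto its image, is not claimed here and is the subject of later results where injectivity of $\MI$ from Proposition \ref{Iinject} together with compactness arguments do the work.)
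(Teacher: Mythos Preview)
Your proof is correct and follows essentially the same approach as the paper: the paper's argument is simply that $f_n\to\F$ by definition entails $\T^Y_n\to\T^Y$, hence $\MI([f_n])\to\MI([\F])$ in the quotient. Your version is more explicit in that you spell out the invocation of Lemma~\ref{ref1} to pass from tree-of-spheres convergence to convergence in the $\TopT_Y$-topology, which the paper leaves implicit.
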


\begin{proof}
Indeed,  if $ (f_n:\T^Y_n\to\T^Z_n) \to (\F:\T^Y\to\T^Z)$, then by definition we have $\T^Y_n\to\T^Y$, ie $I( f_n) \to I(\F)$ so in the quotient $\MI([f_n]){{\to}}\MI([\F]).$
\end{proof}

We will prove the reciprocal property in the next section.

Here is a nice property to note:

\begin{lemma}\label{cvu}
Let ${\cal F}:{\cal T}^Y\to{\cal T}^Z$ be in $ \RevTC_{{\bf F}}$. Let
$v$ be an internal vertex of $T^Y$ with $\deg(v)=\deg \F$ and let ${\cal
F}_n:=(f_n,a_n^Y,a_n^Z)\in  \RevT_{{\bf F}}$ such that $\displaystyle {\cal F}_n\underset{\phi^Y_n,\phi^Z_n}\longrightarrow{\cal F}$. Then the sequence $\phi_{n,F(v)}^Z\circ f_n\circ (\phi_{n,v}^Y)^{-1}:\S_v\to \S_{F(v)}$ converges uniformly
to $f_v:\S_v\to \S_{F(v)}$.
\end{lemma}

This property is not used in this paper but the interested reader can find a proof in \cite{A1}.


\subsection{Compactness}

In this section we prove Theorem \ref{compseqrev} by the inclusions \begin{itemize}
\item ${\overline{{\MI}(\revT_{{\bf F}})}\subseteq\MI(\revTC_{{\bf F}})}$ in Proposition \ref{adhrev2}, and 
\item $\MI(\revTC_{{\bf F}})\subseteq\overline{{\MI}(\revT_{{\bf F}})}$ in propositions \ref{revisol}.
\end{itemize}

First note the fundamental result:

\begin{lemma}\label{distKoeb}
Let $(f_n: \S \to  \S )_n$ be a sequence of rational maps of same degree. Then, there exists a subsequence $(f_{n_k})_{n_k}$ and a sequence of Moebius transformations $(M_{n_k})_{n_k}$ such that $ (M_{n_k}\circ f_{n_k})_{n_k}$ converges to a non constant rational map $f$  uniformly outside a finite number of points.
\end{lemma}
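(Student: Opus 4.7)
The plan is to use the standard identification of $\Rat_d$ with an open subset of $\P^{2d+1}$: a degree-$d$ rational map $f=[P:Q]$, with $P,Q$ homogeneous polynomials of degree $d$ in two variables (without common factor), corresponds to the projective class of the coefficient vector of $(P,Q)$. The complement of this open set in $\P^{2d+1}$ is the discriminant hypersurface where $P$ and $Q$ share a nontrivial common factor.

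First I would invoke compactness of $\P^{2d+1}$ to extract a subsequence $(f_{n_k})_k$ whose coefficient vectors converge to some $[P:Q]\neq 0$. Factor out the greatest common divisor, writing $P=R\tilde P$, $Q=R\tilde Q$ with $\gcd(\tilde P,\tilde Q)=1$; then $\tilde f:=[\tilde P:\tilde Q]$ is a well-defined rational map of degree $d-\deg R$, and $f_{n_k}\to \tilde f$ locally uniformly on $\S$ outside the finite zero set of $R$. If $\deg \tilde f\ge 1$, I am done with $M_{n_k}=\Id$.

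If instead $\deg R=d$, then $\tilde P,\tilde Q$ are both constants and $f_{n_k}$ converges to the constant $c=\tilde P/\tilde Q\in \S$; I would renormalize by post-composition. On sphere-normalized representatives, write $(P_n,Q_n)=(P,Q)+\epsilon_n(P'_n,Q'_n)$ with $\epsilon_n\to 0$ and $\|(P'_n,Q'_n)\|=1$. Passing to a further subsequence so that $(P'_n,Q'_n)$ converges to some unit-norm pair $(P',Q')$, I apply the Möbius transformation $M_n(w)=(w-c)/\epsilon_n$ on the target (assuming $c\ne \infty$, which may be arranged by a fixed preliminary rotation). A direct computation using $P=R\alpha$, $Q=R\beta$ and $c=\alpha/\beta$ shows that $M_n\circ f_{n_k}$ converges to the rational map $(\beta P'-\alpha Q')/(\beta^2 R)$, of degree at most $d$.

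The main obstacle is that this new limit may itself still be constant, which happens precisely when $\beta P'-\alpha Q'$ is a constant multiple of $R$. In that case I iterate the renormalization: the perturbation $(P'_n,Q'_n)$ is itself decomposed into a leading constrained part and a higher-order remainder of strictly smaller scale, and the same post-composition construction is applied to the remainder. Since the ambient space $\P^{2d+1}$ has finite dimension and each iteration strictly reduces the number of parameters available to the constraint, the procedure terminates after finitely many steps. Composing all the successive Möbius transformations produced along the way yields a single $M_{n_k}$ such that $M_{n_k}\circ f_{n_k}$ converges to a non-constant rational map of some degree $d'\le d$, uniformly outside a finite set, as required.
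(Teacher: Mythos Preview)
Your approach via the compactification $\P^{2d+1}\supset\Rat_d$ is quite different from the paper's, and the first reduction --- extract a projective limit $[P:Q]$, factor out $R=\gcd(P,Q)$, and reduce to the case where the na\"ive limit is a constant --- is fine. The gap is in the constant-limit case: the assertion that ``each iteration strictly reduces the number of parameters available to the constraint'' names no actual invariant. After one renormalisation the new projective limit is $[\beta P'-\alpha Q':\beta^2 R]$, and when this is again constant (i.e.\ $\beta P'-\alpha Q'\in\C\cdot R$) you propose to repeat; but nothing forces the successive first-order limits to be linearly independent, nor is it clear what finite quantity drops at each step, so termination is exactly the missing lemma rather than a routine bookkeeping remark. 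One clean way to carry out your idea without any iteration: pass to a subsequence along which the $2$-plane $\mathrm{span}(P_n,Q_n)$ converges in the Grassmannian to some plane $W$, choose a basis $R,S$ of $W$, lift to $R_n=a_nP_n+b_nQ_n\to R$ and $S_n=c_nP_n+d_nQ_n\to S$, and set $M_n(w)=(a_nw+b_n)/(c_nw+d_n)$; then $M_n\circ f_n=R_n/S_n\to R/S$ in one stroke, non-constant since $R,S$ are independent.

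The paper's proof bypasses coefficient space entirely and works on the source sphere. Fix $x_0$ and extract so that the full fibre $X_n:=f_n^{-1}(f_n(x_0))$ converges with multiplicity to a finite set $X$; choose $y_0\notin X$ and extract so that $Y_n:=f_n^{-1}(f_n(y_0))\to Y$; choose $z_0\notin X\cup Y$. Let $M_n$ be the unique M\"obius map sending $f_n(x_0),f_n(y_0),f_n(z_0)$ to $\infty,0,1$. Then $M_n\circ f_n$ has pole divisor $X_n$ and zero divisor $Y_n$, hence is an explicit normalised product over these roots; the product visibly converges outside the common limit roots, and the limit is non-constant because $y_0\in Y\setminus X$ by construction. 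This is a single three-point normalisation with no iteration and no termination issue; the trade-off is that it gives less control over which degenerate limit in $\P^{2d+1}$ one lands on, which your approach (once completed) would provide.
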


\begin{proof}
Define $x_0=\infty$. We extract a subsequence in order to have $${X_n:=f_n^{-1}\circ f_n(x_0)\to X}$$ with multiplicity.
Choose $y_0\in\C\setminus X$.  We extract a subsequence in order to have $$Y_n:=f_n^{-1}(f_n(y_0))\to Y$$ with multiplicity.
Choose $z_0\in\C\setminus (X\cap Y)$. Again, we extract a subsequence in order to have $$Z_n:=f_n^{-1}(f_n(z_0))\to Z.$$

By construction, for all $n$ we can find a Moebius transformation satisfying:
$$ M_n\circ f_n(x_0)=\infty,\;      M_n\circ f_n(y_0)=0,   \; M_n\circ f_n(z_0)=1.$$

Thus we have $$\forall w\in\C, M_n\circ f_n(w)=\frac{\prod_{x\in X_n}(w-x)}{\prod_{y\in Y_n}(w-y)} .\frac{\prod_{y\in Y_n}(z_0-y)}{\prod_{x\in X_n}(z_0-x)}.$$
This sequence of rational maps converges uniformly to a non constant rational map outside a finite number of points which correspond to $X\cap Y$.
\end{proof}

  \begin{proposition}\label{adhrev1}  
Let $(\F_n)_n$ be a sequence in $\RevT_{{\bf F}}$. If $(\MI([\F_n]))_n$ converges in  $\S ^{ \rm Quad_Y}$ then $(\F_n)_n$ converges to a cover between trees of spheres $\F$.
 \end{proposition}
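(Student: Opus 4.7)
The plan is to extract convergent subsequences of the source and target marked-sphere sequences, build a candidate limiting cover by applying the distortion Lemma \ref{distKoeb} vertex by vertex on the source, and then use the injectivity of $\MI$ established in Proposition \ref{Iinject} to pass from subsequential convergence to convergence of the whole sequence.

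First, I would use the hypothesis: $\MI([\F_n]) = \TopT_Y([\T^Y_n])$ converges in $\S^{\rm Quad_Y}$, so Theorem \ref{prop519} together with Lemma \ref{ref1} gives, after extracting a subsequence if necessary, a tree of spheres $\T^Y$ marked by $Y$ and isomorphisms $\phi^Y_{n,v}: \S^Y_n \to \S_v$ for each internal vertex $v$ of $T^Y$ with $\phi^Y_{n,v}\circ a^Y_n \to a^Y_v$. Invoking Corollary \ref{compAdS} on the sequence $(\T^Z_n)$ of $Z$-marked spheres then provides a further subsequence along which $\T^Z_n \to \T^Z$ for some $\T^Z \in \CMT_Z$, with companion isomorphisms $\phi^Z_{n,w}: \S^Z_n \to \S_w$ for each internal vertex $w$ of $T^Z$.

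Second, I would construct the limit cover vertex by vertex on the source. For each fixed internal vertex $v$ of $T^Y$, consider the rational maps of fixed degree $d$
\[ g^v_n := f_n \circ (\phi^Y_{n,v})^{-1}: \S_v \to \S^Z_n. \]
Lemma \ref{distKoeb} produces, after further extraction, Moebius transformations $M_{n,v}: \S^Z_n \to \S$ for which $M_{n,v}\circ g^v_n$ converges uniformly off a finite subset of $\S_v$ to a non-constant rational map. Comparing $M_{n,v}$ with the target charts $\phi^Z_{n,w}$ via Lemma \ref{noncomp}, the composition $\phi^Z_{n,w}\circ (M_{n,v})^{-1}$ degenerates to a constant off one point for every $w$ save a unique $w_v$; for this distinguished $w_v$ the sequence $\phi^Z_{n,w_v}\circ g^v_n$ converges locally uniformly outside a finite set to a non-constant holomorphic map $f_v: \S_v \to \S_{w_v}$. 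Setting $F(v) := w_v$ and performing a diagonal extraction over the finite vertex set of $T^Y$ then yields such convergence simultaneously for every internal vertex.

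Third, I would verify that the data $(F, (f_v))$ really assembles into a cover $\F: \T^Y \to \T^Z$ with portrait ${\bf F}$. Passing to the limit in $f_n\circ a^Y_n = a^Z_n \circ F$ in the charts $(\phi^Y_{n,v}, \phi^Z_{n,F(v)})$ yields $f_v\circ a^Y_v = a^Z_{F(v)}\circ F$, which in particular sends each attaching point $i_v(e) \in Y_v$ to $i_{F(v)}(F(e))$; the combinatorial map $F: T^Y \to T^Z$ then extends consistently to the remaining edges via Lemma \ref{restric} applied to connected components of preimages of small neighborhoods of attaching points on the target side. The equalities of local degrees $\deg_{i_v(e)} f_v = \deg_{i_{v'}(e)} f_{v'}$ along each edge $e=\{v,v'\}$ of $T^Y$ follow by counting preimages of shrinking disks in $\S_{F(v)}$, in the same spirit as the arguments of \cite{A1}. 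By construction this gives $\F_n \to \F$ along the extracted subsequence; Proposition \ref{Iinject} guarantees that the isomorphism class of $\F$ is determined by $\T^Y$ alone, so all subsequential limits agree up to isomorphism, upgrading subsequential convergence to convergence of the whole sequence.

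The main obstacle will be the third step: showing that the pointwise assignment $v \mapsto w_v$ extracted from the distortion lemma assembles into an honest combinatorial tree map, that the resulting tree $\T^Z$ together with the maps $(f_v)$ is compatible with the portrait ${\bf F}$, and above all that the local degrees agree across each edge of $T^Y$. These checks rest on careful preimage-counting arguments, of the same flavor as those already developed in \cite{A1}.
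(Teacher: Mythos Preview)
Your overall architecture matches the paper's: extract a limit $\T^Y$ from the hypothesis, apply Lemma~\ref{distKoeb} vertex by vertex to get nonconstant limits $f_v$, identify a target vertex for each source vertex, and then check that the resulting data is a genuine cover between trees of spheres. The paper does exactly this, and your final remark that the third step is the real obstacle is accurate. Two points in your sketch, however, are not quite right and correspond to the places where the paper does the actual work.

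First, the existence of $w_v$. You assert that $\phi^Z_{n,w}\circ M_{n,v}^{-1}$ fails to degenerate for a \emph{unique} $w_v$; uniqueness follows from Lemma~\ref{noncomp}, but existence does not. Lemma~\ref{noncomp} only compares charts attached to vertices of $\T^Z$, and a priori $M_{n,v}$ is not such a chart. What you need is that the limiting marking $\tilde a_v:=\lim M_{n,v}\circ a^Z_n$ takes at least three distinct values; then a triple $t_v$ witnessing this separates a genuine vertex $w_v$ of $T^Z$, and the $t_v$-chart is equivalent to $M_{n,v}$. The paper proves $\card \tilde a_v(Z)\geq 3$ by a Riemann--Hurwitz count on the complement of small disks about the $\tilde Z_v$, using that $\S_v$ carries at least three edges. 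Without this step your assignment $v\mapsto w_v$ is not defined.

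Second, the appeal to Lemma~\ref{restric} to show that $F$ is a tree map is misplaced: that lemma takes an \emph{existing} cover between trees of spheres and restricts it, so it cannot be used to manufacture the combinatorial map you are trying to build. The paper instead argues directly with annuli: for adjacent $v_1,v_2$ in $T^Y$ one takes disks $D_1,D_2$ about the attaching points of the common edge, forms the annulus $A_n=(\phi^Y_{n,v_1})^{-1}(D_1)\cap(\phi^Y_{n,v_2})^{-1}(D_2)$, and shows that if some vertex $v'$ of $T^Z$ lay strictly between $F(v_1)$ and $F(v_2)$, then $\phi^Z_{n,v'}(f_n(A_n))$ would exhaust $\S_{v'}$ minus two points, contradicting stability. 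The matching of local degrees across an edge is then obtained by a zero/pole count on these annuli, not by citing~\cite{A1} in the abstract. Once you replace these two steps, your outline becomes the paper's proof.
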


\begin{proof}
Let $(\F_n:\T^Y_n\to\T^Z_n)_n$ be a sequence of element of $\RevT_{{\bf F}}$ such that $([I] (\F_n))_n$ converges in $\CST_Y$.
Thus by definition$\T^Y_n\to_{\phi^Y_{n}}\T^Y\in\CST_Y$.  

Fix some charts $\sigma_n^Z:\S^Z_n\to \S $. For every internal vertex $v$ of $\T^Y$, we set $\tilde f_{n,v}:=\sigma^Z_{n}\circ f_n\circ  (\phi^Y_{n,v})^{-1}$.
 According to Lemma \ref{distKoeb}, after passing to a subsequence, we can find a sequence of isomorphisms $(M_{n,v}: \S \to \S )_n$ such that  $(M_{n,v}\circ \tilde f_{n,v})_n$ converges uniformly outside a finite number of points to a non constant holomorphic morphism $\tilde f_v:\S_v\to \S $. (Passing to a subsequence does not affect the result here because we are going to find a $\F$ that depends only on $\T^Y$.)
 We set \begin{itemize}
 \item $\sigma^Z_{n,v}:=M_{n,v}\circ\sigma^Z_{n}$;
\item $\tilde a_v=\lim_{n\to\infty}~ \sigma^Z_{n,v}\circ a^Z_{n}$;
\item ${Y}_v=a_v(Y)$ and  $\tilde{Z}_v=\tilde a_v (Z)$. 
\end{itemize}
 Note that $\tilde f_{v}(Y_v)=\tilde a_v (Z)$.

\centerline{
$\xymatrix{
&\S_n^Y\ar[rr]^{f_n}\ar[dd]_{\phi^Y_{n,v}}\ar[rrdd]^{\tilde f_{n,v}}&&\S_n^Z\ar[dd]^{\sigma_{n,v}}&\\
Y\ar[ru]^{a_n^Y}\ar@{-->}[rd]_{a_v}&&&&Z\ar[lu]_{a^Z_n}\ar@{-->}[ld]^{\tilde a_v}\\
&\S_v\ar@{-->}[rr]^{\tilde f_v}&& \S &
  }$}

\begin{claim}
      Let $\gamma_z$ be the boundary of a small disk around $z\in \tilde{Z}_v$.
      Let $y\in {Y}_v$ be such that $\tilde f_v(y)=z$.
Then there exists $\gamma_y$ surrounding $y$ such that $\tilde f_v(\gamma_y)=\gamma_z$ and
          $\tilde f_{n,v}(\gamma_y)\to\gamma_z$.
\end{claim}

\begin{proof} Indeed, if $\gamma_z$ is small enough, $\tilde f_v^{-1}(\gamma_z)$ is a loop $\gamma_y$ which is the boundary of a disk  containing $y$ and avoiding the other elements of $Y_v$. As on $\S_v\setminus Y_v$, the convergence is uniform, so $\tilde f_{n,v}(\gamma_y)\to\gamma_z$.
\end{proof}

\begin{claim}
For every internal vertex $v$ of $\T^Y$, we have $\card \tilde Z_v\geq 3$.
\end{claim}

\begin{proof} Consider small disks around the elements of $\tilde Z_v$. 
 Suppose $n$ large enough such that the $\tilde f_{n,v}(Y)$ are in these disks. Denote by $D_Z$ the set $ \S $ minus these disks and $D_Y:=\tilde f_v^{-1}(D_Z)$. The Riemann-Hurwitz formula gives 
$$-1\geq\chi(D_Y)=\deg(\tilde f_{v})\chi(D_Z)$$ because $\S_v$ has at least three edges and $D_Y$ has no critical points. As  $\deg(\tilde f_{v})\geq1$, then $\chi(D_Z)\leq -1$ so $\card \tilde Z_v\geq 3$.
\end{proof}

Thus $\card(\tilde a_v(Z))\geq3$. 
Let $t_{v'}$ be a triple of points of $Z$ which have pairwise distinct images by $\tilde a_v$. Let $v'$ be the unique vertex of $T^Z$ separating $t_{v'}$. 
As on the diagram below we use the notation $\sigma_{n,v'}:=\phi^Z_{n,v'}\circ\sigma_{n,v}^{-1}$. 

\centerline{
$\xymatrix{
\S_n^Y\ar[rrd]^{\tilde f_{n,v}}\ar[dd]_{\phi_{n,v}^Y}\ar[rrr]^{f_n}&&&\S_n^Z\ar[ld]_{\sigma_{n,v}}\ar[dd]^{\phi_{n,v'}^Z}\\
&& \S \ar[rd]_{\sigma_{n,v'}}&\\
\S_v\ar@{-->}[rru]^{\tilde f_v}\ar[rrr]^{f_{n,v}}\ar@{-->}@/_1pc/[rrr]^{f_v}&&&\S_{v'}
 }$\medskip}
 From the choice of $t_{v'}$, we know that $\sigma_{n,v'}$ converges to an isomorphism $\sigma_{v'}$. 
Thus $\sigma_{n,v'}\circ\tilde f_{n,v}\to\sigma_{v'}\circ\tilde f_v:=f_v$ locally uniformly outside a finite number of points and $\deg(f_v)\geq1$.

So we have $f_{n,v}:=\phi^Z_{n,v'}\circ f_n\circ(\phi^Y_{n,v})^{-1}\to f_v$  locally uniformly outside a finite number of points and $\deg(f_v)\geq1$.

\begin{claim}
      The map $F:V^Y\to V^Z$ that maps $v$ defined by $t_{v}$ to the vertex defined by $t_{v'}$ extends to a map between trees.
\end{claim}

\begin{proof} 
Let $v_1$ and $v_2$ be two adjacent vertices in $T^Y$ connected by an edge $e$ and let $v_1'$ and $v_2'$ be their respective images. Let $D_1$ (reps. $D_2$) be a topological disk neighborhood of the attaching point of $e$ on $v_1$ (reps. ${v_2}$) and containing only this attaching point of edge and let $C_1$ (resp. $C_2$) be its boundary.  Denote by $A_n:=(\phi^Y_{n,v_1})^{-1}(D_1)\cap(\phi_{n,v_2}^Y)^{-1}(D_2)\subset \S_n$, Denote by $C'_\star:=f_{v_\star}(C_\star)$ and $A'_n:=f_n(A_n)$. We now suppose that $n$ is large enough such that $A_n$ is an annulus and does not contain any attaching point of edges. Thus $A'_n$ does not contain any attaching point of edges neither. As the critical points of $f_n$ are attaching points of edges, $A_n$ does not contain critical points and $A'_n$ is an annulus.

Suppose that there is a vertex $v'$ between $v'_1$ and $v'_2$. As $\phi^Z_{n,v_\star}(C'_\star)\to C'_\star$, Lemma \ref{noncomp} implies that $\phi^Z_{n,v'}(A'_n)$ tends to $\S_{v'}$ minus the attaching points of the branches containing respectively $v'_1$ and $v'_2$. As $A'_n$ does not contain attaching points of edge, $\S_{v'}$ has only two attaching points of edges which contradicts the stability of $T^Z$. Thus $F$ maps two adjacent vertices to two adjacent vertices.
\end{proof}

In particular we proved that the image of the attaching point of $e$ on $v_\star$ is the attaching point of $F(e)$ on $v'_\star$ ie $f_{v_\star}(e_{v_\star})$.

\begin{claim}
      The map $\F:\T^Y\to \T^Z$ defined by $F$ and the $f_v$ is a cover between trees of spheres.
\end{claim}

\begin{proof}
Take $v'_1:=F(v_1)$ with $v_1$ an internal vertex of $T^Y$. Let $e':=\{v'_1,v'_2\}$ be an edge of $T^Z$. Let $C'_1$ (resp. $C'_2$) be a topological circle surrounding a disk $D'_1$ (resp. $D'_2$) containing a unique attaching point on $v'_1$ (resp. on $v'_2$), the one of $e'$. Define $A'_n=\phi^Z_{n,v'_1}(D'_1)\cap\phi^Z_{n,v'_2}(D'_2)$ and suppose $n$ big enough such that $A'_n$ is an annulus. Let $A_n$ be a connected component of $f_n^{-1}(A'_n)$. From the Riemann-Hurwitz Formula, we deduce that $A_n$ is an annulus. Denote by $C_{1,n}$ and $C_{2,n}$ the preimages of $C'_{1}$ and $C'_{2}$ surrounding $A_n$ and by $D_{1,n}$ the disks bounded by $C_{1,n}$ containing $A_n$. We suppose $n$ large enough such that the partition of $a_n(Y)$ (resp. $a_n(Z)$) given by the two connected components of $\S_n{\setminus} A_n$ (resp. $\S_n{\setminus} A'_n$) is constant.

Take $z_2\in Z\cap B_{v_1'}(F(e))$ and $z_1\in Z\cap B_{v_2'}(F(e))$. Then $a_n(z_1)$ and $a_n(z_2)$ are respectively in each of the two connected components of $\S_n{\setminus} A'_n$. After choosing a projective chart $\sigma_n$ such that $\sigma_n\circ a_n(z_1)=0$ and $\sigma_n\circ a_n(z_2)=\infty$, we suppose that $\S_n= \S $, $a_n(z_1)=0$ and $a_n(z_2)=\infty$.

Denote by $$n_0:=\card\{y\in Y\cap D_{1,n}~|~ f_n(y)=0\}$$ and 
$$n_\infty:=\card\{y\in Y\cap D_{1,n}~|~ f_n(y)=\infty\}.$$
The local degree of $f_{v_1}$ at the attaching point of $e$ is the same as the degree of $f_{v_1}$ on $C_{1,n}$ which is the one of $f_n$ on $(\phi^Y_{n,v_1})^{-1}(C_{1,n})$, ie 
$$\deg_{f_{v_1}}(e)=n_0-n_\infty.$$
Note that these two cardinals don't depend on the choice of the pair $(z_1,z_2)$ in the connected components of $\S_n{\setminus} A'_n$. Again these cardinals are the same if we consider $D_2$ instead of $D_1$ because $A_n$ does not contain critical values. By the same deductions on $v_2$ we prove that $\deg_{f_{v_1}}(e)=\deg_{f_{v_2}}(e)$.

In particular, if $n_0\neq 0$ then $\phi_{n,v_1}(D_{1,n})$ contains an attaching point of an edge; thus every preimage of an edge attaching point is the attaching point of an edge. As the image of an edge attaching point is an edge attaching point, $f_v: Y_v\to Z_{F(v)}$ is a cover. Moreover the critical points of $f_v$ are the limits of the critical points of $\phi^Z_{n,F(v)}\circ f_n\circ(\phi^Y_{n,v})^{-1}$ so they are attaching points of edges.
\end{proof}

This concludes the proof of proposition \ref{adhrev1} because as required we have $$\F_n\underset{(\phi^Y_n,\phi^Z_n)}\longrightarrow  \F.$$\end{proof}

\begin{corollary}\label{equivtopo2}
The topology given by $\MI$ is compatible with the convergence notion defined on $\RevTC_{\bf F}$:
$$ \F_n \to \F \quad\text{ if and only if }\quad\MI([\F_n]){{\to}}\MI([\F]).$$
\end{corollary}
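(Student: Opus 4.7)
The forward implication is exactly Corollary \ref{sensdirect}, so the only new content is the converse: assuming $\MI([f_n]) \to \MI([\F])$ in the topology of $\S^{\rm Quad_Y}$, I need to produce a convergence $f_n \to \F$ in the sense of marked covers.

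My plan is to argue by a standard subsequence-extraction scheme together with uniqueness. It suffices to show that every subsequence of $(f_n)$ admits a sub-subsequence converging to $\F$ in the convergence notion of $\RevTC_{\bf F}$. Given any subsequence $(f_{n_k})_k$, the assumption ensures that $(\MI([f_{n_k}]))_k$ still converges (to the same limit), so Proposition \ref{adhrev1} applies and yields a further subsequence, which I will again denote $(f_{n_k})_k$, together with a cover $\F':\T'^Y \to \T'^Z$ between trees of spheres such that $f_{n_k} \to \F'$ in the sense of the non-dynamical convergence of covers.

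Next I apply the already-established forward direction (Corollary \ref{sensdirect}) to this extracted convergence: it gives $\MI([f_{n_k}]) \to \MI([\F'])$. Since by hypothesis $\MI([f_{n_k}]) \to \MI([\F])$, uniqueness of limits in the Hausdorff space $\S^{\rm Quad_Y}$ (which follows from Lemma \ref{rempart2} applied to $\MT_Y$) forces $\MI([\F']) = \MI([\F])$. Injectivity of $\MI$ from Proposition \ref{Iinject} then gives $[\F']=[\F]$, i.e.\ $\F'\sim \F$. Finally, by Lemma \ref{convquot} part \ref{quot111}, $f_{n_k}\to \F'$ combined with $\F'\sim \F$ upgrades to $f_{n_k}\to\F$. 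This proves the sub-subsequence property and hence $f_n\to \F$.

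The delicate point is ensuring that Proposition \ref{adhrev1} can indeed be applied from the mere hypothesis that $\MI([f_n])$ converges: this is exactly its content, namely that convergence of the image sequence in $\S^{\rm Quad_Y}$ is enough to extract a convergent subsequence of covers. Everything else is a formal consequence of the injectivity of $[I]$, the compatibility of the convergence with the equivalence relation recorded in Lemma \ref{convquot}, and the Hausdorff property of the target, so no further technical estimate is needed.
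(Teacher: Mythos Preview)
Your proof is correct and follows essentially the same route as the paper: apply Proposition~\ref{adhrev1} to obtain a limit cover $\F'$, use Corollary~\ref{sensdirect} and injectivity of $\MI$ (Proposition~\ref{Iinject}) to identify $[\F']=[\F]$, and conclude. Your version is slightly more careful in two respects: you run the argument through the subsequence/sub-subsequence scheme (the proof of Proposition~\ref{adhrev1} does in fact pass to subsequences, so this caution is warranted), and you explicitly invoke Lemma~\ref{convquot} to pass from $\F'\sim\F$ to $f_{n_k}\to\F$, whereas the paper writes ``$\F=\F'$'' directly.
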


\begin{proof}
The implication is given by Corollary \ref{sensdirect}.
Reciprocally if $$\MI([\F_n:\T^Y_n\to\T^Z_n]){{\to}}\MI([\F:\T^Y\to\T^Z])$$ then according to Proposition \ref{adhrev1},
$(\F_n)_n$ converges to a cover between trees of spheres $\F'$ so $\MI([\F_n]){{\to}}\MI([\F'])$.
We deduce that ${\MI([\F'])=\MI([\F])}$, thus $\F=\F'$ according to Proposition \ref{Iinject}.
\end{proof}

We can also directly deduce the theorem assumed in \cite{A1}.

\begin{corollary}\label{thmcomp00}
Let $y_n$ and $z_n$ be two sequences of spheres marked respectively by the finite sets $Y$ and $Z$ containing each one at least three elements and converging to the trees of spheres $\T^Y$ and $\T^Z$. 

Every sequence of marked spheres covers $(f_n,y_n,z_n)_n$ of a given portrait converges to a cover between the trees of spheres $\T^Y$ and $\T^Z$.
\end{corollary}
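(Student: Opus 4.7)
The plan is to reduce the statement to Proposition~\ref{adhrev1}, which already produces a limit cover between trees of spheres out of a sequence whose $Y$-side converges; the only remaining work is to recognize that limit as a cover from $\T^Y$ to $\T^Z$, up to isomorphism.

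First I would check the hypothesis of Proposition~\ref{adhrev1}. By definition $\MI([\F_n])=\pi_1\circ[I]([\F_n])$ is the class $[y_n]\in \MT_Y$, where each $y_n$ is regarded as a tree of spheres with a unique internal vertex. The hypothesis $y_n\to \T^Y$, combined with Lemma~\ref{ref1}, forces $\TopT_Y([y_n])$ to converge in $\S^{\rm Quad_Y}$ to $\TopT_Y([\T^Y])$. The image of $\MI([\F_n])$ in $\S^{\rm Quad_Y}$ therefore converges, which is exactly what is needed to invoke Proposition~\ref{adhrev1}; that proposition yields a cover between trees of spheres $\F:\T^Y_\infty\to \T^Z_\infty$ and sequences of isomorphisms $(\phi^Y_n,\phi^Z_n)$ such that $\F_n\to \F$.

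It remains to identify $\T^Y_\infty$ and $\T^Z_\infty$ with the given $\T^Y$ and $\T^Z$. Reading off the first and second coordinates of the convergence $\F_n\to \F$ gives $y_n\to \T^Y_\infty$ and $z_n\to \T^Z_\infty$. Combined with the standing hypotheses $y_n\to \T^Y$ and $z_n\to \T^Z$, the uniqueness clause of Lemma~\ref{convquot1} produces isomorphisms of marked trees of spheres $\mathcal{M}^Y:\T^Y_\infty\to \T^Y$ and $\mathcal{M}^Z:\T^Z_\infty\to \T^Z$. Transporting the holomorphic data of $\F$ along the pair $(\mathcal{M}^Y,\mathcal{M}^Z)$ builds a cover $\F':\T^Y\to \T^Z$ isomorphic to $\F$ in the sense of Section~\ref{chap6}, and the quotient clause of Lemma~\ref{convquot} (first bullet) transports the convergence $\F_n\to \F$ into $\F_n\to \F'$, which is exactly the statement of the corollary.

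The entire analytic difficulty lies inside Proposition~\ref{adhrev1}, namely the Koebe-type normalization supplied by Lemma~\ref{distKoeb}, the uniform convergence outside a finite exceptional set, and the verification that the combinatorial limit $F$ respects adjacency and local degrees. The present corollary is essentially bookkeeping, reconciling the two non-canonical limit trees produced by that construction with the canonical representatives $\T^Y$ and $\T^Z$ through uniqueness of limits.
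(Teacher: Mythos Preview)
Your proposal is correct and follows the same route as the paper, which simply states that the corollary is a direct consequence of Proposition~\ref{adhrev1}. You have in fact supplied more detail than the paper does: the identification of the limit trees via Lemma~\ref{convquot1} and the transport of the cover via Lemma~\ref{convquot} make explicit what the paper leaves implicit.
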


Hence we have proven:

\begin{proposition}\label{adhrev2} 
The set $\MI(\revTC_{{\bf F}})$ is closed. In particular $$\overline{{\MI}(\revT_{{\bf F}})}\subseteq\MI(\revTC_{{\bf F}}).$$
\end{proposition}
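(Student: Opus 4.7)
The plan is to establish the stronger statement that $\MI(\revTC_{\bf F})$ is closed in $\S^{\rm Quad_Y}$; the stated inclusion ${\rm Ad}(\MI(\revT_{\bf F}))\subseteq\MI(\revTC_{\bf F})$ then follows at once from $\MI(\revT_{\bf F})\subseteq\MI(\revTC_{\bf F})$. So I would start with an arbitrary sequence $([\F_n])_n$ in $\revTC_{\bf F}$ such that $\MI([\F_n])\to P$ in $\S^{\rm Quad_Y}$, and aim to exhibit $\F\in\RevTC_{\bf F}$ with $\MI([\F])=P$.

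First I would invoke Proposition \ref{revisol} (density of $\revT_{\bf F}$ in $\revTC_{\bf F}$, established earlier in the section by a plumbing construction) to approximate each $\F_n$ by spheres covers: for every $n$ there exists a sequence $(\G_{n,k})_k$ in $\RevT_{\bf F}$ converging to $\F_n$ as $k\to\infty$. By Corollary \ref{sensdirect} (continuity of $\MI$ with respect to the convergence of covers between trees of spheres), this yields $\MI([\G_{n,k}])\to\MI([\F_n])$ as $k\to\infty$.

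Next, since $\S^{\rm Quad_Y}$ is a finite product of Riemann spheres, hence metrizable, a standard diagonal extraction gives indices $k_n$ such that $\G_n:=\G_{n,k_n}$ satisfies $\MI([\G_n])\to P$. The sequence $(\G_n)_n$ now lies in $\RevT_{\bf F}$ and its $\MI$-images converge, so I can apply Proposition \ref{adhrev1} directly: it produces a cover $\F\in\RevTC_{\bf F}$ such that (a subsequence of) $(\G_n)_n$ converges to $\F$. Applying Corollary \ref{sensdirect} to this subsequence yields $\MI([\G_n])\to\MI([\F])$ along it, and uniqueness of limits in the Hausdorff space $\S^{\rm Quad_Y}$ forces $\MI([\F])=P$, so $P\in\MI(\revTC_{\bf F})$.

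The main obstacle is that the real convergence analysis, which relies on the Koebe-style distortion control of Lemma \ref{distKoeb}, has only been carried out for spheres covers in Proposition \ref{adhrev1}. The diagonal argument above is what allows me to reduce the general tree-source case to the spheres case: density (Proposition \ref{revisol}) supplies the approximants, continuity of $\MI$ (Corollary \ref{sensdirect}) controls where their images go, and metrizability of the ambient space makes the diagonal extraction legal and produces the correct limit. No independent re-derivation of the convergence analysis for tree-source sequences is needed.
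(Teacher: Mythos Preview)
Your argument is correct. The paper's own proof is a single sentence citing Proposition \ref{adhrev1} and Corollary \ref{equivtopo2}; taken literally, that combination only treats sequences coming from $\revT_{\bf F}$ (sphere covers), so it directly yields the inclusion ${\rm Ad}(\MI(\revT_{\bf F}))\subseteq\MI(\revTC_{\bf F})$, and the closedness claim is then read off from the equality $\MI(\revTC_{\bf F})={\rm Ad}(\MI(\revT_{\bf F}))$ once Proposition \ref{revisol} is also available. You instead prove closedness head-on: you invoke Proposition \ref{revisol} to approximate an arbitrary tree cover by sphere covers, diagonalize in the metrizable target $\S^{\rm Quad_Y}$, and then feed the resulting sphere-cover sequence into Proposition \ref{adhrev1}. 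The ingredients are the same; your route is simply more explicit about what is needed for the full closedness statement versus the inclusion alone. One minor point of presentation: in the paper's ordering Proposition \ref{revisol} is stated after Proposition \ref{adhrev2}, so your argument makes a forward reference --- this is logically harmless since the plumbing proof of \ref{revisol} is independent of \ref{adhrev2}.
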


Let us now prove the reverse inclusion.

\begin{proposition}\label{revisol}
 The set $\RevT_{{\bf F}}$ is dense in $\RevTC_{{\bf F}}$. In particular we have $$\MI(\revTC_{{\bf F}})\subseteq\overline{{\MI}(\revT_{{\bf F}})}.$$
\end{proposition}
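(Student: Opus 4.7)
The plan is to use the \emph{plumbing} construction mentioned in the remark after lemma~\ref{ref0}, building for each cover $\F:\T^Y\to\T^Z$ in $\RevTC_{\bf F}$ a sequence $\F_n\in\RevT_{\bf F}$ with $\F_n\to\F$. Once such a sequence is produced, corollary~\ref{sensdirect} gives $\MI([\F_n])\to\MI([\F])$, hence $\MI([\F])\in{\rm Ad}(\MI(\revT_{\bf F}))$.

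The key is to plumb $\T^Y$ and $\T^Z$ simultaneously with compatible parameters. For each edge $e'$ of $T^Z$ between two internal vertices, pick a sequence $s^{(n)}_{e'}\in\C^*$ with $s^{(n)}_{e'}\to 0$. For each internal edge $e$ of $T^Y$ with $F(e)=e'$ and local degree $d_e$, choose $t^{(n)}_e$ to be a $d_e$-th root of $s^{(n)}_{e'}$. At each attaching point $i_v(e)$ of an internal edge of $T^Y$, choose a local coordinate $w_e$ centered at $i_v(e)$, together with a local coordinate $u_{e'}$ centered at $i_{F(v)}(e')$ on $\S_{F(v)}$, such that $f_v$ takes the normal form $u_{e'}=w_e^{d_e}$; this is possible since every holomorphic germ of local degree $d_e$ is biholomorphically conjugate to $z\mapsto z^{d_e}$. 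Build $\S^Y_n$ by excising small disks $\{|w_e|<r\}$ from each $\S_v$ around every internal-edge attaching point and gluing via $w_e^{(v_1)}w_e^{(v_2)}=t^{(n)}_e$ for each internal edge $e=\{v_1,v_2\}$; build $\S^Z_n$ analogously using the $s^{(n)}_{e'}$. Mark by $a^Y_n(y):=a_v(y)$ for the internal vertex $v$ adjacent to a leaf $y$ (leaves sit in regions untouched by the plumbing), and similarly define $a^Z_n$.

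The compatibility $u_{e'}^{(F(v_1))}u_{e'}^{(F(v_2))}=(w_e^{(v_1)}w_e^{(v_2)})^{d_e}=s^{(n)}_{e'}$ guarantees that the individual $f_v$ glue into a single holomorphic map $f_n:\S^Y_n\to\S^Z_n$ of degree $d=\deg(\F)$, i.e.\ a rational map in $\RevT_{\bf F}$ whose local degrees and portrait are those of $\F$. To verify $\F_n\to\F$, pick projective isomorphisms $\phi^Y_{n,v}:\S^Y_n\to\S_v$ that equal the natural identification on the large surviving part of $\S_v$, and similarly $\phi^Z_{n,w}$. Then $\phi^Y_{n,v}\circ a^Y_n=a_v$ on leaves adjacent to $v$, while leaves in other branches concentrate, as $t^{(n)}_e\to 0$, at the attaching point $a_v(y)$ of the corresponding edge at $v$, giving $\phi^Y_{n,v}\circ a^Y_n\to a_v$; similarly on the $Z$ side. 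Finally $\phi^Z_{n,F(v)}\circ f_n\circ(\phi^Y_{n,v})^{-1}$ equals $f_v$ on the surviving subregion and therefore converges locally uniformly outside $Y_v$ to $f_v$.

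The main difficulty is the rigorous bookkeeping of the plumbed isomorphisms and the concentration of leaves across several grafted necks, which is the standard but technical heart of a nodal-degeneration argument and requires modulus estimates for the annuli. A mild subtlety is that the normal form $f_v=w_e^{d_e}$ is unique only up to a $d_e$-th root of unity in $w_e$, but the freedom in choosing the $d_e$-th root $t^{(n)}_e$ of $s^{(n)}_{e'}$ precisely absorbs this ambiguity, making a consistent global choice possible.
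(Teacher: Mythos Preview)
Your proposal is correct and takes essentially the same approach as the paper: both construct a one-parameter family of covers between marked spheres by plumbing the spheres of $\T^Y$ and $\T^Z$ along their internal edges with compatible gluing data, so that the local maps $f_v$ patch into a global rational map. The paper phrases the gluing via explicit annuli of radii $\epsilon$ and $\epsilon^2$ and abstract biholomorphisms $\phi^e_\epsilon$ chosen among the $d_e$ lifts of $\phi^{e'}_\epsilon$, while you use the equivalent standard plumbing coordinates $w^{(v_1)}_e w^{(v_2)}_e = t^{(n)}_e$ together with the local normal form $u_{e'}=w_e^{d_e}$; the compatibility condition $(t^{(n)}_e)^{d_e}=s^{(n)}_{e'}$ is exactly the commutativity of the paper's square diagram. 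One small point worth making explicit in your write-up is that the target coordinate $u_{e'}$ must be fixed once and for all on each side of each edge of $T^Z$ \emph{before} choosing the source coordinates $w_e$ on the various preimage edges, so that all the $f_v$ with $F(v)=w$ share the same target chart; your wording leaves this slightly ambiguous. The paper also includes the Euler-characteristic check that the plumbed surface is a sphere, which you take for granted but which is indeed standard.
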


\begin{proof}
Take $\F:\T^Y\to\T^Z$ in $\RevTC_{{\bf F}}$.
In this proof, we don't distinguish the spheres at the vertices of $\T^Y$ and $\T^Z$ and some of their projective charts arbitrarily chosen. 
Take $1>\epsilon>0$. Take an edge $e$ between two vertices $v_1,v_2$. Define by $v'_i:=F(v_i)$ and denote by $e':=F(e)$ the edge between $v'_1$ and $v'_2$. 

Let $A'_1$ (resp. $A'_2$) be an annulus between the circles of radii $\epsilon^2$ and $\epsilon$ centered on the attaching point $e'_{v'_1}$ (resp. $e'_{v'_2}$). Let $\phi^{e'}_\epsilon:A'_1\to A'_2$ be an biholomorphism that exchanges the borders of the two annuli (maps the circle of radius $\epsilon^2$ on $A'_1$ to the one of radius $\epsilon$ on $A'_2$, reversing the orientation, and reciprocally).

Let $A_i$ be the preimage of $A'_i$ on $v_i$. We consider an $\epsilon$ small enough such that the $A_i$ are in neighborhoods of the $e_{v_i}$ that map with degree $\deg_{f_{v_i}}(e_{v_i})$ and such that each of these neighborhoods contain a unique edge attaching point. As $\F$ is a covering between trees of spheres, we have $d_e:=\deg_{f_{v_1}}(e_{v_1})$=$\deg_{f_{v_2}}(e_{v_2})$. We choose one of the $d_e$ biholomorphisms $\phi^{e}_\epsilon$ that makes the following diagram commuting: 

\centerline{
$\xymatrix{
A_1\ar[r]^{\phi^{e}_\epsilon}\ar[d]_{f_{v_1}}&A_2\ar[d]^{f_{v_2}}\\
A'_1\ar[r]_{\phi^{e'}_\epsilon}&A'_2.
  }$}

As $F:E^Y\to E^Z$ is surjective, after repeating this process we obtain some families $\Phi$ of biholomorphisms associated to the edges between the internal vertices of $T^Y$ and $\Phi'$ associated to the same one of $T^Z$. We suppose $\epsilon$ small enough such that all the annuli already defined don't have common pairwise intersections. For every internal vertex $v$ of $T^\star$, denote by $\S^\star_{\epsilon,v}$ the sphere $\S^\star_v$ minus some topological closed disks around the attaching points of edges connecting to internal vertices which are bordered by the $A_i$ (resp. $A'_i$) as previously defined (but does not contain the $A_i$ (resp. $A'_i$).
We use the notations $$\S^Y_\epsilon:=\bigsqcup_\Phi \S^Y_{\epsilon,v} \text{ and } \S^Z_\epsilon:=\bigsqcup_{\Phi'} \S^Z_{\epsilon,v}.$$

Every element $y$ of $Y$ is a vertex of $T^Y$ which has a unique edge so it is adjacent to a unique internal vertex $v_y$ of $T^Y$. Denote by $e_y$ the attaching point of this edge on $v_y$. 
We define a family of injections $a_\epsilon^Y:Y\to\S^Y_\epsilon$ that associate $v_y$ to $y$. 

\begin{claimint} 
For $\epsilon$ small enough, $\S^Y_\epsilon$ with $a^Y$ is a marked sphere $\T^Y_\epsilon$ and if $\epsilon\to 0$, we have $$\T^Y_\epsilon\to \T^Y.$$
\end{claimint}

\begin{proof}
The set of internal vertices of $T^Y$ and edges connecting them is a subtree $T'$ of $T^Y$. Thus it satisfies  $\card{V'}=\card{E'}+1$ (see for example \cite[Corollary 1.5.3]{graphtheory}).
In addition the Euler characteristic of $\S^Y_\epsilon$ is equal to the sum of the one of the $\check \S^{\epsilon}_{v}$ because the one of an annulus is 0. But the $\check \S^{\epsilon}_{v}$ are spheres minus a disk for each of the  edge of $v\in T'$. So the Euler characteristic of $\S^Y_\epsilon$ is $$\sum (2- \card E'_v)=2\card V'-2\card E'=2(\card V'-\card E')=2.$$
But $\T^Y$ is connected, so $\S^Y_\epsilon$is connected too and it follows that $\S^Y_\epsilon$ is a topological sphere. As $\Phi$ is a family of isomorphisms, $\S^Y_\epsilon$ is equipped of a complex structure. Thus we proved that $\S^Y_\epsilon$ together with $a_\epsilon^Y$ is a sphere marked by $Y$ that we will denote by $\T^Y_\epsilon$.

Moreover, for every $v$ internal vertex of $T^Y$, if we define $\phi_{\epsilon,v}$ an isomorphism defined by the identity on $\check \S^{\epsilon}_{v}$, then we have $\T^Y_{\epsilon}\to_{\phi_\epsilon}\T^Y$ as required because the $\check \S^{\epsilon}_{v}$ tend to the $\S_v$.
\end{proof}

Similarly  we construct a family of injections $a_\epsilon^Z:Z\to\S^Z_\epsilon$ then the associated trees of spheres $\T^Z_\epsilon$ and we have $\T^Z_{\epsilon}\to\T^Z$.

We are now ready to prove that the maps $\F_\epsilon:=(\F|_{\S^Y_{\epsilon}}: \T^Y_{\epsilon}\to\T^Z_{\epsilon})$ form a family of covers between marked spheres (for $\epsilon$ small enough) and $[\F_\epsilon]\to[\F]$.

Indeed, for $\epsilon$ small enough, the $ \S^Y_{\epsilon,v}$ for $v$ internal vertex of $T^Y$ form a cover of $\S^Y_{\epsilon}$ and the map $\F_\epsilon$ restricted on these ones is  holomorphic, then $f_\epsilon$ is holomorphic. By definition $(F_\epsilon|_Y,\deg|_Y)={\bf F}$ so $f_\epsilon$ is a cover on the edges. Thus, for $\epsilon$ small enough, $\F_\epsilon$ is a cover between marked spheres. In addition we have $[\T^Y_{\epsilon}]\to [\T^Y]$.

\end{proof}



\section{Dynamics}\label{chap7}

\subsection{Introduction}

In this section we suppose that $X\subseteq Y\cap Z$ is a finite set with at least three elements. We will say that $({\cal F},{\cal T}^X)$ is a dynamical system between trees of spheres if :
\begin{itemize}
\item ${\cal F}:{\cal T}^Y\to {\cal T}^Z$ is a cover between trees of spheres, 
\item ${\cal T}^X$ is a tree of spheres compatible with ${\cal T}^Y$ and ${\cal T}^Z$, ie : 
	\begin{itemize}
	\item $X\subseteq Y\cap Z$, 
	\end{itemize}
and for  each internal vertex $v$ of $T^X$:
	\begin{itemize} 
	\item $v$ is an internal vertex common to $T^Y$ and $T^Z$, 
	\item  ${\S}_v^X = {\S}_v^Y = {\S}_v^Z$, and 
	\item $a_v^X = a_v^Y|_X = a_v^Z|_X$. 
	\end{itemize}
\end{itemize}

Dynamical covers between marked spheres can be naturally identified to dynamically marked rational maps:

Recall that a rational map dynamically marked by  $({\bf F},X)$ is a rational map $(f,y,z)$ marked by  ${\bf F}$ such that  $y|_X=z|_X$. 
We denote by 
 $\Rat_{{\bf F},X}$ the set of rational maps dynamically marked by $({\bf F},X)$ and by $\rat_{{\bf F},X}$ its quotient modulo the action by conjugacy of the group of Moebius transformation.


\subsection{Conjugacy and compactification}

\begin{definition}
Two dynamical systems between trees of spheres $({\F_1}, \T_1^X)$ and $ (\F_2,\T_2^X)$ are conjugated if there exist two isomorphism between trees of spheres $\mathcal M^Y:\T^Y_1\to \T^Y_2$ and $\mathcal M^Z:\T^Z_1\to \T^Z_2$ such that:
$$\F_1\sim_{(\mathcal M^Y,\mathcal M^Z)} \F_2$$
and for every internal vertex $v\in \T_1^X$, $$m_v^Y=m_v^Z .$$
\end{definition}

We denote by 
$\DRevTC_{{\bf F},X}$ the set of such dynamical systems between trees of spheres of portrait ${\bf F}$.
We denote by 
$\drevTC_{{\bf F},X}$ their conjugacy classes.
With this definition the set $rat_{{\bf F},X}$ defined in the introduction is naturally identified to the set of classes of dynamical systems between marked spheres.

\begin{lemma}
The map that associate to every $[(\F,\T^X)]\in\drevTC_{{\bf F},X}$ the element $[\F]\in\revTC_{{\bf F}}$ is an injection.
\end{lemma}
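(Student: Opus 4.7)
The plan is to show that any witnessing pair of isomorphisms $({\cal M}^Y,{\cal M}^Z)$ of the cover isomorphism $\F^1\sim\F^2$ automatically satisfies $m_v^Y=m_v^Z$ on internal vertices of $T^X$, and hence already serves as a conjugacy of the dynamical systems. Concretely, suppose $(\F^1,\T_1^X)$ and $(\F^2,\T_2^X)$ have the same image in $\revTC_{\bf F}$ and fix a pair $({\cal M}^Y,{\cal M}^Z)$ realising $\F^1\sim_{({\cal M}^Y,{\cal M}^Z)}\F^2$. For each $v\in IV^{X,1}$ one must first identify the target vertex of $v$ in $T_2^X$ and then compare the two sphere isomorphisms on it.

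Fix $v\in IV^{X,1}$. Compatibility of $\T_1^X$ gives $v\in IV^{Y,1}\cap IV^{Z,1}$ with $\S_v^{X,1}=\S_v^{Y,1}=\S_v^{Z,1}$ and $a_v^{X,1}=a_v^{Y,1}|_X=a_v^{Z,1}|_X$. Since ${\cal M}^Y$ restricts to the identity on $Y\supseteq X$, it preserves branch partitions, so $P^Y_{M^Y(v)}=P^Y_v$ as partitions of $Y$; restricting to $X$ yields $P_v^X$, which has at least three blocks by the stability of $T_1^X$. Using Theorem~\ref{bijpartition} and the compatibility of $\T_2^X$, this forces $M^Y(v)$ to correspond to the unique internal vertex $w\in IV^{X,2}$ whose $X$-partition equals $P_v^X$, and the same reasoning applied to ${\cal M}^Z$ identifies $M^Z(v)$ with the \emph{same} vertex $w$. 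Consequently $\S^{Y,2}_{M^Y(v)}=\S^{X,2}_w=\S^{Z,2}_{M^Z(v)}$, and $m_v^Y$ and $m_v^Z$ are both projective isomorphisms $\S_v^{X,1}\to\S_w^{X,2}$.

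Next I would compare them on $X$. The marking condition $a^{Y,2}_{M^Y(v)}=m_v^Y\circ a_v^{Y,1}$, restricted to $X$ and combined with the compatibility of $\T_2^X$, yields $m_v^Y\circ a_v^{X,1}=a_w^{X,2}$, and similarly $m_v^Z\circ a_v^{X,1}=a_w^{X,2}$. Hence $m_v^Y$ and $m_v^Z$ coincide on $a_v^{X,1}(X)$, which contains at least three distinct points thanks to the stability of $T_1^X$. Two projective automorphisms of the Riemann sphere that agree on three distinct points are equal, so $m_v^Y=m_v^Z$. Since this holds for every $v\in IV^{X,1}$, the pair $({\cal M}^Y,{\cal M}^Z)$ provides the required conjugacy.

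The main obstacle is the middle identification $M^Y(v)=M^Z(v)=w$ inside $T_2^X$: the equality $m_v^Y=m_v^Z$ only even makes sense once we know the two isomorphisms share a common target sphere, and this step genuinely uses the classification of combinatorial trees by admissible partitions (Theorem~\ref{bijpartition}) together with the fact that isomorphisms of marked trees fix the markings. Once this identification is in hand, the remainder reduces to marking preservation plus the standard three-point rigidity of projective automorphisms, and no modification of the ambient pair $({\cal M}^Y,{\cal M}^Z)$ is needed.
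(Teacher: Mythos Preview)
Your proof is correct and follows essentially the same route as the paper: show that the given pair $({\cal M}^Y,{\cal M}^Z)$ already satisfies $m_v^Y=m_v^Z$ on every $v\in IV^{X,1}$, by first identifying $M^Y(v)=M^Z(v)$ as a common vertex of $T_2^X$ and then using three-point rigidity via the compatibility conditions $a_v^X=a_v^Y|_X=a_v^Z|_X$ on both sides. The only difference is one of detail: the paper dispatches the identification $M^Y|_{T_1^X}=M^Z|_{T_1^X}$ in a single sentence (``It is clear that\ldots''), whereas you unpack it carefully via branch partitions and Theorem~\ref{bijpartition}; your version makes explicit exactly the point you flag as the main obstacle.
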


\begin{proof}
Take $(\F_1,\T^X_1)$ and $(\F_2,\T^X_2)$ in $\DRevTC_{{\bf F},X}$ such that ${\F^1\sim_{(\mathcal M^Y,\mathcal M^Z)} \F^2}$. We want to prove that $[(\F_1,\T^X_1)]=[(\F_2,\T^X_2)]$. 

It is clear that $M^Y|_{T_1^X}=M^Z|_{T_1^X}$.
Take $v$ an internal vertex of $T^X_1$.
As $(\F_2,\T^X_2)\in\DRevTC_{{\bf F},X}$ we have $$a_{M^Y(v)}^X = a_{M^Y(v)}^Y|_X = a_{M^Z(v)}^Z|_X$$  and as ${(\F_1,\T^X_1)\in\DRevTC_{{\bf F},X}}$ we have $a_{v}^X = a_{v}^Y|_X = a_{v}^Z|_X$. Thus we deduce that  $m_v^Y\circ (m_v^Z)^{-1}$ fixes $a_{M^Z(v)}^Z|_X$ which contains at least three elements so we have $m_v^Y=m_v^Z .$
\end{proof}

According to this lemma we make an identification of $\drevTC_{{\bf F},X}$ in $\revTC_{{\bf F}}$ and we define the topology of $\drevTC_{{\bf F},X}$ as the restriction of the one in $\revTC_{{\bf F}}$. 
With this topology we are going to prove Theorem \ref{ccompact}.


We have $\rm Quad_X\subset \rm Quad_Y$. Denote by $\pi_{Y,X}$ the natural projection $$\pi_{Y,X}: \S ^{\rm Quad_Y}\to  \S ^{\rm Quad_X}.$$ In the following we define a map $\Pi_{Y,X}$ from the set of trees of spheres marked by $Y$ to the one marked by $X$. We are interested in this map because of the following observation.

\begin{lemma}\label{defPi0}
The tree $\T^X$ is compatible with $\T^Y$ if and only if $$\T^X=\Oub_{X,Y}(\T^Y).$$
\end{lemma}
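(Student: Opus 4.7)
The plan is to first make explicit what the forgetful map $\Oub_{X,Y}$ must be, and then verify that both directions of the equivalence reduce to unwinding the three defining clauses of compatibility (common internal vertices, equal spheres, restricted markings).

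The definition of $\Oub_{X,Y}(\T^Y)$ that I would use is the following: starting from $\T^Y$, remove all leaves in $Y\setminus X$, and then iteratively contract every internal vertex whose valence drops below $3$ (keeping the sphere, projective structure and remaining marked points of the one vertex that absorbs the contracted edge). Concretely, an internal vertex $v$ of $T^Y$ survives in $\Oub_{X,Y}(\T^Y)$ if and only if the partition $P_v^Y\cap X:=\{B\cap X\mid B\in P_v^Y\}$ contains at least three nonempty elements; for such a $v$ we set $\S_v^X:=\S_v^Y$ and $a_v^X:=a_v^Y|_X$. That this procedure yields a well-defined marked tree of spheres follows from Theorem \ref{bijpartition} and Corollary \ref{propsurj}: the surviving partitions $\{P_v^Y\cap X\mid v\text{ survives}\}$ form an admissible set of partitions of $X$, which represents a unique stable combinatorial tree up to isomorphism.

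For the forward implication, assume $\T^X$ is compatible with $\T^Y$. Then every internal vertex $v$ of $T^X$ is an internal vertex of $T^Y$ with $\S_v^X=\S_v^Y$ and $a_v^X=a_v^Y|_X$; so the partition of $X$ at $v$ in $T^X$ is $P_v^Y\cap X$, which therefore contains at least three nonempty blocks (by stability of $T^X$). Conversely, if $v$ is an internal vertex of $T^Y$ whose partition $P_v^Y\cap X$ has at least three nonempty blocks, then the admissibility characterization of Theorem \ref{bijpartition} forces $v$ to correspond to an internal vertex of $T^X$. Hence the set of surviving vertices of $\Oub_{X,Y}(\T^Y)$ coincides with $IV^X$, and all of the sphere data, projective structures, and markings agree by compatibility. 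By the injectivity part of Theorem \ref{bijpartition}, this identifies $\T^X$ with $\Oub_{X,Y}(\T^Y)$.

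The reverse implication is immediate from the construction: if $\T^X=\Oub_{X,Y}(\T^Y)$, then by definition of $\Oub_{X,Y}$ each internal vertex $v\in IV^X$ is an internal vertex of $T^Y$, the spheres and projective structures coincide, and $a_v^X=a_v^Y|_X$, which are exactly the three clauses defining compatibility. The main technical point of the argument is the combinatorial step showing that the partition $P_v^Y\cap X$ of $X$ is (after discarding empty blocks) precisely the partition associated to $v$ in the stabilized tree; this is where Theorem \ref{bijpartition} does all the work, and why the admissible-partition framework developed in the preceding section was set up in the first place.
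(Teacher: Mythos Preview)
Your approach is essentially the paper's: identify the internal vertices of $\T^X$ with those vertices of $T^Y$ that still separate three elements of $X$, and check that the spheres and restricted markings agree. The paper phrases this through triples $t\in\mathrm{Trip}_X$ and the vertex $v_t$ of $T^Y$ separating $t$, while you phrase it through the restricted partitions $P_v^Y\cap X$; these are two descriptions of the same set of vertices, and both lead to the same forgetful tree.

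There is, however, one step whose justification is too loose. In the forward direction you write that if $v\in IV^Y$ has $P_v^Y\cap X$ with at least three nonempty blocks, then ``the admissibility characterization of Theorem~\ref{bijpartition} forces $v$ to correspond to an internal vertex of $T^X$''. Theorem~\ref{bijpartition} by itself does not give this: one admissible set of partitions of $X$ can strictly contain another (a star on $X$ versus a caterpillar, say), so the inclusion $\psi(T^X)\subseteq\{P_v^Y\cap X:\ v\text{ survives}\}$ that you have established does not, on its own, force equality. What actually closes the argument is the uniqueness of the separating vertex in $T^Y$: pick $x_1,x_2,x_3\in X$ in three distinct blocks of $P_v^Y$; let $w\in IV^X$ be the vertex of $T^X$ separating them. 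By compatibility $w\in IV^Y$ and $a_w^Y|_X=a_w^X$ sends $x_1,x_2,x_3$ to distinct points, so $w$ also separates $x_1,x_2,x_3$ in $T^Y$; hence $w=v$ and $v\in IV^X$. This is exactly the triple-based viewpoint the paper is using, and once you insert it your proof is complete. The final appeal to the injectivity part of Theorem~\ref{bijpartition} (or, equivalently, to Lemma~\ref{rempart2}) to conclude $\T^X=\Oub_{X,Y}(\T^Y)$ from the equality of partitions and markings is then legitimate.
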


\begin{proof}
Suppose that $\T^X$ is compatible with $\T^Y$. Then
each $t\in \rm Trip_{X}$ is separated by a unique vertex $v_t$ of $\Oub_{Y,X}(\T^X)$ and a unique vertex ${v'_t}$ of $\T^X$. We have $\T^X$ is compatible with $\T^Y$ and $\T^Z$ if and only if $\forall t\in Trip_{X}, a_{v'_t}^X = a_{v'_t}^Y|_X=a_{v_t}$ if and only if ${\T^X=\Oub_{X,Y}(\T^Y)}$.

Reciprocally , if $\T^X=\Oub_{X,Y}(\T^Y)$, the vertices of $\T^X$ are vertices of $\T^Y$ and by construction we have $a_v^X=a_v^Y|_X$ for all $v$ internal vertex of $T^X$.
\end{proof}

We prove in the following that this new map well behave in the quotient by the natural isomorphism relation as a map $\OubQ_{Y,X}$.

\begin{definition} We denote by $\OubQ_{Y,X}$ the map such that the following diagram commutes : 

\centerline{
$\xymatrix{\CMT_Y\ar[r]^{\Top_Y}\ar[d]_{\OubQ_{Y,X}}& \S^{ \rm Quad_Y} \ar[d]^{\pi_{Y,X}}\\
\CMT_X \ar[r]^{\Top_X}&\S^{ \rm Quad_X}.
}$} 
\end{definition}


Let $\T^Y$ be a tree of spheres marked by $Y$.
Denote by $\Pt$ the collection of partitions of $X$ associated to the vertices of $Y$ separating three elements of $X$. Recall that we defined in Definition \ref{defadmpart} admissible collections of partitions.

\begin{lemma}
The set $\Pt$ is an admissible collection of partitions.
\end{lemma}

\begin{proof}
\ref{mt1}. By definition the vertices for which we are considering the partitions separate three elements of $X$.

\ref{mt2}. Let $P$ be a partition corresponding to a vertex $v\in T^Y$ and $B\in P$. Either $B=\{x\}$, or $\card B>1$ and in this case, the branch on $V$ corresponding to $B$ contains at least an internal vertex separating two elements of $X$. Let $v'$ be one of these vertices in this branch which are the closest to $v$ (for the length of $[v,v']$). Let $e'$ be the edge on $v'$ connecting $v$ to $v'$. Then $B_{v'}(e')= (X\setminus B)$. Indeed, suppose that this is not the case, we find an element $x\in B\cap B_{v'}(e')$. Take $x_1\in B\setminus\{x\}$ and $x_2\in X\setminus B$. The vertex separating this triple $(x_1,x,x_2)$ is between $v$ and $v'$ (because $x,x_2\in B_{v'}(e')$ and $x,x_1\in B$) which contradicts the minimality of $v'$.

\ref{mt3}. Suppose by contradiction that we have $v_1$ and $v_2$ two vertices of $T^Y$ for which the associated partitions of $X$ are $P_1$ and $P_2$ and such that $P_1\cap P_2\ni B(\neq\emptyset)$. Let $B_1$(resp. $B_2$) be the branch of $v_1$ (resp. $v_2$) corresponding to $B$. As $B\in B_1\cap B_2$ we have $v_1\in B_2$ (or $v_1\in B_2$ which is a symmetric case). Let $e_1$ be the edge on $v_1$ connecting it to $v_2$. Given that $v_1$ separate three elements of $X$, we find $x\in X{\setminus} (B\cup B_{v_1}(e_1))$ which is absurd because $x\notin B_{v_1}(e_1)$ so $x\in B_2\in X=B$. 
\end{proof}

According to Corollary \ref{propsurj}, the set $\Pt$ determines a unique isomorphism class of combinatorial trees $[T^X]$. For all $t\in \rm Trip_{X}$, we denote by $v_t$ the vertex separating $t$ in $T^Y$.
Denote by $\T^X$ the tree of spheres which combinatorial tree is the representative of $[T^X]$ for which each internal vertex associated to a triple $t$ is $v_t$ and for which the map associated to each internal vertex $v$ defined by a triple $t$ is $a_v:=a_{v_t}|_{X}$. We use the notation $\Oub_{Y,X}(\T^Y):=\T^X$.

\begin{lemma}\label{defPi}
The map $\OubQ_{Y,X}$ is continuous as the quotient of the map $\Oub_{Y,X}$ by the isomorphism equivalence relation on the marked trees of spheres.
\end{lemma}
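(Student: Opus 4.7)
The plan is to deduce continuity of $\OubQ_{Y,X}$ from the commutativity of the defining diagram, using the fact that $\TopT_X$ is a homeomorphism onto its image (by the corollary following Lemma \ref{rempart2}) and that $\pi_{Y,X}$ is continuous as a projection between products of spheres.

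First I would verify that $\Oub_{Y,X}$ descends to the quotient, i.e.\ if $\T^Y\sim_{\mathcal M}\T'^Y$ then $\Oub_{Y,X}(\T^Y)\sim\Oub_{Y,X}(\T'^Y)$. Since the tree isomorphism $M$ fixes $Y\supseteq X$ pointwise, it preserves the partitions $P_v^Y\cap X$ and therefore maps the vertices of $T^Y$ that separate at least three elements of $X$ bijectively to the corresponding vertices of $T'^Y$. By Theorem \ref{bijpartition}, the combinatorial trees built from these partitions are isomorphic, and the sphere isomorphisms $m_v:\S_v\to\S_{M(v)}$ restrict to give the required isomorphism of trees of spheres marked by $X$, since $a_{M(v)}^Y|_X = m_v\circ a_v^Y|_X$.

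Next I would check that the diagram commutes, i.e.\ $\TopT_X\circ\OubQ_{Y,X} = \pi_{Y,X}\circ\TopT_Y$. Fix a quadruple $q=(x_0,x_1,x_\infty,x)\in\rm Quad_X\subset\rm Quad_Y$ and let $t:=(x_0,x_1,x_\infty)$. By construction of $\Oub_{Y,X}$, the vertex $v_t$ of $T^Y$ that separates $t$ in $T^Y$ is precisely the vertex of $\Oub_{Y,X}(\T^Y)$ associated to the triple $t$, and its marking on $\Oub_{Y,X}(\T^Y)$ is $a_{v_t}^Y|_X$. Therefore the $t$-chart of $\Oub_{Y,X}(\T^Y)$ agrees with the $t$-chart of $\T^Y$ composed with the inclusion $X\hookrightarrow Y$, and the cross-ratios appearing in the $q$-coordinate of $\TopT_X(\OubQ_{Y,X}[\T^Y])$ and of $\pi_{Y,X}(\TopT_Y[\T^Y])$ coincide.

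Finally, since $\TopT_Y$ and $\TopT_X$ are injective (Lemma \ref{rempart2}) and $\TopT_X$ is a homeomorphism onto its image, the commutativity relation above uniquely determines $\OubQ_{Y,X}$ and expresses it as $\TopT_X^{-1}\circ\pi_{Y,X}\circ\TopT_Y$; continuity follows. The step that requires the most care is the identification of the separating vertex in $\Oub_{Y,X}(\T^Y)$ with the separating vertex $v_t$ in $\T^Y$, which requires unwinding the admissible-partition construction used to define $\Oub_{Y,X}$; the rest is then formal.
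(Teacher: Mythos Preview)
Your proposal is correct and follows essentially the same route as the paper: first check that $\Oub_{Y,X}$ descends to the quotient, then that the defining diagram commutes, and finally deduce continuity from the factorization $\OubQ_{Y,X}=\TopT_X^{-1}\circ\pi_{Y,X}\circ\TopT_Y$. The paper's proof is much terser (it simply asserts that $\T^Y_1\sim_{\mathcal M}\T^Y_2$ implies $\Oub(\T^Y_1)\sim_{\mathcal M}\Oub(\T^Y_2)$ and that the diagram commutes ``directly from the definition of $\Oub_{Y,X}$''), so your version supplies the details the paper omits, but the argument is the same.
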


\begin{proof} Indeed, if $\T^Y_1\sim_{\cal M} \T^Y_2$ then $\Oub(\T^Y_1)\sim_{\cal M} \Oub(\T^Y_2)$. The formula follows directly from the definition of $\Oub_{Y,X}$ and as $\pi_{Y,X}$ is continuous we deduce that the map is continuous too. Moreover, $\Oub_{Y,X}$ acts on the marked spheres by restricting the marking map so it is the map previously defined.
\end{proof}

\begin{proof}(Theorem \ref{ccompact})
According to Proposition \ref{equivmod} and the defynition of its topology, the set $\revTC_{{\bf F}}$ can be identified to a subspace of ${\MT}_Y\times{\MT}_Z$.
 
According to Lemma \ref{defPi0} and Lemma \ref{defPi}, we have $$\drevTC_{{\bf F},X}=\{([\T^Y],[\T^Z])\in\revTC_{{\bf F}}\;|\; \OubQ_{Y,X}([\T^Y])=\OubQ_{Z,X}([\T^Z])\}.$$
 
So $\drevTC_{{\bf F},X}$ is a closed set in  $\revTC_{{\bf F}}$ which is compact.
\end{proof}


\subsection{Convergence}

 In $\DRevTC_{{\bf F},X}$ we define the natural convergence notion of a sequence in $\Rat_{{\bf F},X}$ as follows.
 
\begin{definition}[Dynamical convergence]\label{defcvdyn}
 A sequence $({ \F}_n,a_n^Y,a_n^Z)_n$ in $\Rat_{{\bf F},X}$ converges to $({ \F},{ \T}^X)\in \Rat_{{\bf F},X}$ if  $$\displaystyle { \F}_n\underset{\phi_n^Y,\phi_n^Z}\longrightarrow{ \F}\quad\text{with}\quad\phi_{n,v}^Y=\phi_{n,v}^Z$$ for all vertex $v$ internal vertex of $T^X$. 
\end{definition}


We prove that this dynamical convergence is compatible with the topology.

\begin{lemma}\label{ilesttemps}
 A sequence of dynamical systems between marked spheres converges to a dynamical system between trees of spheres if and only if it dynamically converges to this limit.
\end{lemma}

\begin{proof} Suppose that $(\F_n,\T^X_n)_n$ is a sequence of dynamical systems converging to a dynamical system $(\F,\T^X)$: $$\F_n\underset{(\phi^Y_n,\phi^Z_n)}\longrightarrow\F.$$

For all $t\in \rm Trip_X$, we define $\tilde\phi^Y_{n,t}=\phi^X_{n,t}$ and $\tilde\phi^Z_{n,t}=\phi^X_{n,t}$ (see notations following Remark \ref{convgeneral}). Then, for all triple ${t\in \rm Trip_Y-\rm Trip_X}$, we define $\tilde\phi^Y_{n,t}=\phi^Y_{n,t}$ and for $t\in \rm Trip_Z-\rm Trip_X$, $\tilde\phi^Z_{n,t}=\phi^Z_{n,t}$.

For all $t\in \rm Trip_X$, $(\tilde\phi^Y_{n,t})^{-1} \circ\tilde\phi^Y_{n,t}$ tends to the identity of $\S_t$ because it converges to the identity on the three elements of $t$. Thus we have $(\F_n,\T_n^X)$ converges dynamically to $(\F,\T^X)$ with respect to the families of sequences $(\tilde\phi^Y_n)_n$ and $(\tilde\phi^Z_n)_n$.
\end{proof}


We can now deduce the statement below that was assumed in \cite{A1}.
\begin{corollary}
If $(\F_n)_n$ is a sequence in $\Rat_{{\bf F},X}$, then after passing to a subsequence, there exists $(\F,\T^X)\in \DRevTC_{{\bf F},X}$ such that $(\F_n,\T^X_n)_n$ converges dynamically to $(\F,\T^X)$.
\end{corollary}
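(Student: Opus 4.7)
The plan is to derive this corollary directly from the compactness Theorem~\ref{ccompact} combined with Lemma~\ref{ilesttemps}, by passing through the quotient space. Starting from the given sequence $((\F_n,\T^X_n))_n$ in $\DRevTC_{{\bf F},X}$, I would first project it to $\drevTC_{{\bf F},X}$; since that space is compact, some subsequence $([(\F_{n_k},\T^X_{n_k})])_k$ converges to a class $[(\F,\T^X)]\in\drevTC_{{\bf F},X}$, and I fix any representative $(\F,\T^X)\in\DRevTC_{{\bf F},X}$ of this limit class to work with.

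To turn this class-level convergence into convergence of the underlying dynamical systems, I would unfold the construction of the topology: $\drevTC_{{\bf F},X}$ carries the subspace topology from $\revTC_{{\bf F}}$, whose topology is itself defined via the injection $\MI$ into $\MT_Y$. Consequently $\MI([\F_{n_k}])\to\MI([\F])$, and Corollary~\ref{equivtopo2} then guarantees that the specific representatives $\F_{n_k}$ themselves converge non-dynamically to $\F$ in $\RevTC_{{\bf F}}$, realized by some families of isomorphisms $(\phi^Y_{n_k},\phi^Z_{n_k})$. Applying Lemma~\ref{ilesttemps} to this convergence then upgrades it into genuine dynamical convergence $(\F_{n_k},\T^X_{n_k})\to(\F,\T^X)$, which is exactly the assertion.

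The subtle point in this pipeline is that the limit is obtained in the quotient, while the conclusion is stated for a dynamical system with the original markings $\T^X_{n_k}$: one must make sure that the representative $(\F,\T^X)$ chosen in the first step is compatible with these markings in the sense required by Definition~\ref{defcvdyn}. This is what Lemma~\ref{ilesttemps} delivers for free, since once the non-dynamical limit lies in $\DRevTC_{{\bf F},X}$ the identity $\phi^Y_{n,v}=\phi^Z_{n,v}$ on $IV^X$ can always be arranged by replacing the $\phi_n^Y$ and $\phi_n^Z$ by common $t$-charts for triples $t\in\rm Trip_X$. No further work is therefore needed beyond combining the three ingredients cited above.
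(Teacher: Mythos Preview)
Your argument is correct and is exactly the approach the paper takes: the paper's own proof is the single sentence ``This corollary follows directly from Theorem~\ref{ccompact} and lemma~\ref{ilesttemps}'', and you have simply unpacked what that sentence means, making explicit the intermediate passage through Corollary~\ref{equivtopo2} that is already implicit in the proof of Lemma~\ref{ilesttemps}.
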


\begin{proof}This corollary follows directly from Theorem \ref{ccompact} and Lemma \ref{ilesttemps}.
\end{proof}


\begin{proposition} We have the following inclusions:
$$  \overline{\rat_{{\bf F},X}}\subsetneq\drevTC_{{\bf F},X}\subsetneq \revTC_{{\bf F}}. $$
\end{proposition}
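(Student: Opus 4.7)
The plan is to decompose the statement into the two inclusions and their strictness. The inclusions themselves are essentially formal, while the strictness requires the construction of explicit counterexamples.

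First, to establish $\rat_{{\bf F},X}\subseteq\drevTC_{{\bf F},X}$, I would invoke the identification made at the start of the section: a dynamically marked rational map $(f,y,z)$ is naturally a dynamical system $(\F,\T^X)$ where $\T^Y,\T^Z,\T^X$ all have a single internal vertex and are compatible because $y|_X=z|_X$. The closure ${\rm Ad}(\rat_{{\bf F},X})$ is then automatically contained in $\drevTC_{{\bf F},X}$, since $\drevTC_{{\bf F},X}$ is compact by Theorem \ref{ccompact}, hence closed inside $\revTC_{{\bf F}}$. The inclusion $\drevTC_{{\bf F},X}\subseteq\revTC_{{\bf F}}$ is just the injection used to define the topology of $\drevTC_{{\bf F},X}$.

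For the strict inclusion $\drevTC_{{\bf F},X}\subsetneq\revTC_{{\bf F}}$, I would build a direct combinatorial example: choose a portrait $\bf F$ and a cover $\F:\T^Y\to\T^Z$ whose tree structures yield different $X$-induced trees, i.e.\ $\OubQ_{Y,X}([\T^Y])\neq\OubQ_{Z,X}([\T^Z])$. By the description
$$\drevTC_{{\bf F},X}=\{([\T^Y],[\T^Z])\in\revTC_{{\bf F}}\mid\OubQ_{Y,X}([\T^Y])=\OubQ_{Z,X}([\T^Z])\}$$
obtained in the proof of Theorem \ref{ccompact} (via Lemma \ref{defPi0} and Lemma \ref{defPi}), such an $[\F]$ sits in $\revTC_{{\bf F}}$ but not in $\drevTC_{{\bf F},X}$. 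Concretely, any portrait with $|X|\geq 4$ and enough flexibility in $Y,Z$ to produce trees of spheres whose marked branches give different $X$-partitions on the two sides will work.

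The main obstacle is strictness of ${\rm Ad}(\rat_{{\bf F},X})\subsetneq\drevTC_{{\bf F},X}$, for which I would produce an element of $\drevTC_{{\bf F},X}$ that is not a limit of dynamically marked rational maps. The key observation is that in a dynamical system between trees of spheres, the local rational maps $f_v:\S_v\to\S_{F(v)}$ can in principle be chosen independently at each internal vertex (subject only to the cover-between-trees axioms), whereas a true limit of rational maps of degree $d$ must satisfy additional holonomy constraints coming from the modulus and twist parameters of the shrinking annuli that connect the vertices in the approximating spheres. Concretely, I would pick a portrait and a tree of spheres $\T^Y=\T^Z$ with at least two internal vertices, set up a dynamical system $(\F,\T^X)$ whose vertex maps cannot be matched simultaneously through the plumbing construction of Proposition \ref{revisol} while preserving the dynamical identification $y|_X=z|_X$, and then observe that no sequence in $\rat_{{\bf F},X}$ can converge to it. Producing such an example, and verifying the non-approximability rigorously using the convergence description from \cite{A1}, is the delicate step; the dimension/parameter count between $\drevTC_{{\bf F},X}$ and the dynamical moduli $\rat_{{\bf F},X}$ suggests this is generically the case once the trees have enough structure.
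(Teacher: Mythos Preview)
Your handling of the two plain inclusions is correct and in fact more explicit than the paper, which does not spell them out: that $\rat_{{\bf F},X}\subseteq\drevTC_{{\bf F},X}$ and that the closure stays inside by compactness (Theorem~\ref{ccompact}) is exactly right, and the second inclusion is just the defining injection. Your argument for the strictness $\drevTC_{{\bf F},X}\subsetneq\revTC_{{\bf F}}$ via the description $\drevTC_{{\bf F},X}=\{([\T^Y],[\T^Z])\in\revTC_{{\bf F}}\mid\OubQ_{Y,X}([\T^Y])=\OubQ_{Z,X}([\T^Z])\}$ is also sound; the paper does not even comment on this part.

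The gap is in the first strict inclusion ${\rm Ad}(\rat_{{\bf F},X})\subsetneq\drevTC_{{\bf F},X}$. The paper's proof here is nothing more than a citation: it invokes an explicit example constructed in \cite{A2} of a dynamical system between trees of spheres that is provably not a dynamical limit of dynamically marked rational maps. Your proposal replaces this with a heuristic: independent choices of the $f_v$, holonomy/twist obstructions in the plumbing, and a dimension count ``suggesting this is generically the case''. None of that is a proof. In fact Proposition~\ref{revisol} shows the plumbing always works in $\revTC_{{\bf F}}$, so the obstruction is not that the $f_v$ are too freely chosen; it is a genuinely dynamical phenomenon. As the remark immediately following the proposition indicates, the actual obstructions identified in \cite{A1} and \cite{A2} are failures of the \emph{branches lemma} and the \emph{annuli lemmas}, which are necessary conditions for dynamical approximability. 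Without either reproducing such an explicit example or isolating a concrete necessary condition that some $(\F,\T^X)\in\DRevTC_{{\bf F},X}$ violates, this step remains open in your write-up.
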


\begin{proof} In \cite{A2} we give an example of element in $\DRevTC_{{\bf F},X}$ that is not a dynamical limit of dynamical covers between marked spheres so we have $\overline{\rat_{{\bf F},X}}\subsetneq\drevTC_{{\bf F},X}$. 
\end{proof}

\begin{remark}
In \cite{A2} are proven several general properties about dynamical systems between trees of spheres which are limits of dynamical systems between marked spheres called the annuli lemmas and branch lemma.
If a dynamical system between trees of marked spheres (in $\revTC_{\bf F,X}$) satisfies these lemmas, we can hope that it is in the closure of $\rat_{\bf F}$ but this is still an open question.
\end{remark}




\end{document}